\newcommand{\expn}{\operatorname{e}}
\tikzset{thick,level distance=3mm,sibling distance=6mm}
\newcounter{cprop}[section]
\newtheorem{theorem}[cprop]{Theorem}
\newtheorem*{theorem*}{Theorem}
\theoremstyle{plain}
\newtheorem*{corollary*}{Corollary}
\newtheorem{lemma}[cprop]{Lemma}
\newtheorem{proposition}[cprop]{Proposition}
\numberwithin{equation}{section}
\theoremstyle{definition}
\newtheorem{definition}[cprop]{Definition}
\newtheorem{example}[cprop]{Example}
\theoremstyle{remark}
\newtheorem{remark}[cprop]{Remark}
\newcommand{\R}{\mathbb{R}}
\newcommand{\N}{\mathbb{N}}
\newcommand{\Z}{\mathbb{Z}}
\newcommand{\vertiii}[1]{{\left\vert\kern-0.25ex\left\vert\kern-0.25ex\left\vert #1 
    \right\vert\kern-0.25ex\right\vert\kern-0.25ex\right\vert}}
\begin{document}
\title[Runge-Kutta for RDEs]{Runge-Kutta methods for rough differential equations}

\author{M. Redmann}
\address{Martin Redmann, Martin Luther University Halle-Wittenberg, Institute of Mathematics, Theodor-Lieser-Str. 5, 06120 Halle (Saale), Germany}
\email{martin.redmann@mathematik.uni-halle.de}

\author{S. Riedel}
\address{Sebastian Riedel \\
Institut f\"ur Mathematik, Technische Universit\"at Berlin, Germany and Weierstra{\ss}-Institut, Berlin, Germany}
\email{riedel@math.tu-berlin.de}

\renewcommand{\thefootnote}{\fnsymbol{footnote}} 
\footnotetext{2020 \emph{Mathematics Subject Classification.} 60H10, 60H35, 60L20, 60L70, 65C30.}     
\renewcommand{\thefootnote}{\arabic{footnote}} 

\keywords{B-series, rough paths, Runge-Kutta methods}

% 
% \keywords{multiplicative ergodic theorem, random dynamical systems, rough paths, stochastic delay differential equation}
% 
% \subjclass[2010]{34K50, 37H10, 37H15, 60H99, 60G15}

\begin{abstract}
  We study Runge-Kutta methods for rough differential equations which can be used to calculate solutions to stochastic differential equations driven by processes that are rougher than a Brownian motion. We use a Taylor series 
  representation (B-series) for both the numerical scheme and the solution of the rough differential equation in order to determine conditions that guarantee the desired order of the local error for the 
  underlying Runge-Kutta method. Subsequently, we prove the order of the global error given the local rate. In addition, we simplify the numerical approximation by introducing a Runge-Kutta scheme 
  that is based on the increments of the driver of the rough differential equation. This simplified method can be easily implemented and is computational cheap since it is derivative-free. 
  We provide a full characterization of this implementable Runge-Kutta method meaning that we provide necessary and sufficient algebraic conditions for an optimal order of convergence in case that the driver, e.g., is 
  a fractional Brownian motion with Hurst index $\frac{1}{4} < H \leq \frac{1}{2}$. We conclude this paper by conducting numerical experiments verifying the theoretical rate of convergence.
\end{abstract}

\maketitle

\section*{Introduction}

Ordinary differential equations (ODEs) have many real life applications. They, e.g., describe chemical, physiological and ecological processes or they appear as spatially discretized 
partial differential equations like the heat equation. Often analytic solutions to ODEs do not exist which requires numerical approximations in order to solve these equations. 
An important class of such schemes are Runge-Kutta methods \cite{butcher, hairer10sodI, hairer10sodII} which can be of arbitrary order of convergence.
These are often preferred in practice since they are derivative-free in contrast to Taylor methods. Computing derivatives of the right hand side function 
$f_0$ of an ODE can either be very costly or closed form expressions might not be available.\\
However, in many applications uncertainties need to be taken into account.
Therefore, for a more accurate modeling in such cases, a noise term can be added to an ODE leading to stochastic differential equations (SDEs). Runge-Kutta schemes for SDEs driven by a Brownian motion have already 
been established, see, e.g., \cite{BB00, debrabant08bao, kloeden99nso, milstein04snf, roessler10rkm}.\smallskip

Lyons' rough paths theory provides an alternative way to SDEs which goes far beyond the scope of usual It\=o equations. In this paper, we are interested in studying numerical schemes to solve 
rough differential equations (RDEs) of the form
\begin{align}\label{eqn:RDE_with_drift}
 dy(t) = f_0(y(t))\, dt + f(y(t))\, d\mathbf{X}(t),\quad y(t_0) = y_0 \in \R^m,
\end{align}
where $\mathbf{X}$ is a suitable rough path above some $\alpha$-H\"older path $X = (X^1,\ldots,X^d) \colon [0,T] \to \R^d$, 
%in the sense of \cite[Definition 7.2]{Gub10}, 
$f = (f_1,\ldots,f_d)$ and $f_i \colon \R^m \to \R^m$ are vector fields for every $i = 0,\ldots,d$. Such equations represent SDEs driven by stochastic processes that are potentially rougher than a
Brownian motion if $\mathbf{X}$ is a random rough path, i.e. a stochastic process with sample paths lying in a rough paths space. One benefit of rough paths theory compared to It\=o's theory
is that one is not restricted to the martingale framework. In fact, there is a large class of stochastic processes which have ``natural extensions'' to rough paths valued processes, cf. \cite{FV10}.
For instance, many Gaussian processes possess such a ``natural lift'' including the fractional Brownian motion with Hurst parameter $H > 1/4$, but even more general processes like the bifractional Brownian motion,
Volterra processes or processes which can be represented by random Fourier series, cf. \cite{FGGR16} for a discussion.\smallskip

In the context of rough paths theory, numerical schemes are indispensable when simulating the solution to an RDE driven by a random rough path or when discretizing rough stochastic partial differential
equations \cite{BBRRS18}. In fact, numerical schemes played a fundamental role in rough paths theory from the very beginning. This is probably most visible in the work of Davie \cite{Dav07},
where the Milstein scheme is used to solve rough differential equations theoretically. This approach was generalized to higher order Taylor-type schemes by Friz and Victoir \cite{FV10}. However,
in a stochastic context, these schemes are of little use in practice since they contain iterated stochastic integrals whose distribution is unknown in general. 
To overcome this difficulty, Deya, Neuenkirch and Tindel introduced so-called \emph{simplified} schemes in \cite{DNT12} in which the iterated stochastic integrals are replaced by products of increments of the driving process.
These schemes were successfully used in different contexts, cf. e.g. \cite{BFRS16,BBRRS18}. 
%To analyze their convergence rate, the order of the Wong-Zakai approximation is crucial. In \cite{LT19}, Liu and Tindel presented a different approach which lead to optimal convergence rates for the simplified schemes without using the the Wong-Zakai approximation. 
% However, in all these approaches, Taylor methods were considered. Up to our knowledge, the only exception is the work of Hong, Huang and Wang \cite{HHW18} which studies a class of symplectic Runge-Kutta schemes to solve rough differential equations driven by Gaussian processes.
However, as Taylor methods, these numerical approximations suffer from the need to calculate or simulate derivatives of the vector fields $f_k$. As mentioned above, even if the derivatives are available,
this can be very expensive especially in a large scale setting (e.g. spatially discretized rough partial differential equation). Moreover, the simplified scheme is difficult to implement in general. 
Therefore, we see the need of studying Runge-Kutta methods for rough differential equations that can easily be implemented and are derivative-free. \smallskip

Our approach to establish Runge-Kutta methods is classical, both in the deterministic and the stochastic context: First, we define a class of equations which can be expanded in a $B$-series. 
% It turns out that the approach of Burrage and Burrage \cite{BB00} gives exactly the $B$-series representation 
Second, we have to find a $B$-series representation of the equation \eqref{eqn:RDE_with_drift}. Comparing both series, we can, in principle, deduce the order conditions of the Runge-Kutta method by matching their coefficients.
A $B$-series representation of an ODE contains combinations of products and derivatives of the defining vector field which can be described in the language of trees. For SDEs, integrated products of iterated stochastic integrals
have to be considered in addition which can be described in the same language. We call such objects \emph{tree-iterated (stochastic) integrals} in the sequel. 
A rough path in the sense of Lyons \cite{Lyo98,LQ02,LCL07} is a collection of objects which ``mimic'' the iterated integrals of the underlying path. 
Lyons' theory is able to solve differential equations driven by \emph{geometric} rough paths, i.e. those which obey the usual chain rule. Gubinelli realized in \cite{Gub10} that one can even solve rough differential equations
driven by non-geometric rough paths if one additionally assumes that all tree-iterated integrals are known. He calls such objects \emph{branched rough paths}. Thinking of $B$-series representations of SDEs,
this is a very natural approach to solve equations of the form \eqref{eqn:RDE_with_drift}. For us, it is therefore reasonable to use his theory and to interpret the equation \eqref{eqn:RDE_with_drift} as a
rough differential equation driven by a branched rough path. Doing this, we are able to deduce the $B$-series expansion of \eqref{eqn:RDE_with_drift} in Theorem \ref{B-series_RDE}.
Comparing both $B$-series and matching their coefficients up to a given order for an arbitrary multidimensional driving process $X$ and its tree-iterated integrals can be very hard,
cf.  \cite[Section 4]{BB00} for a $2$-dimensional example. However, we already pointed out that in practice, one is not able to simulate the tree-iterated integrals anyway. 
We therefore make the same ansatz as in \cite{DNT12} and replace the tree-iterated integrals by products of increments. This simplifies the task of matching the coefficients a lot, and one is able to deduce
the order conditions, in principle, up to any order, cf. Theorem \ref{Thm_local_error} and the subsequent remark. We call such schemes \emph{simplified Runge-Kutta methods}.
As in \cite{DNT12}, the Wong-Zakai error plays a fundamental role in their convergence analysis. Loosely speaking, our main result is the following:

\begin{theorem*}
 Let $\mathbf{X}$ be an $\alpha$-H\"older rough path (branched or geometric) and assume that $f_0$ and $f$ are sufficiently smooth and bounded with bounded derivatives.
 If the Wong-Zakai error to approximate \eqref{eqn:RDE_with_drift} is of order $r_0$, a simplified Runge-Kutta method \eqref{eqn:RK_equations_simple} of order $p$ converges with rate \mbox{$\min\{(p+1)\alpha - 1, r_0\}$}. 
\end{theorem*}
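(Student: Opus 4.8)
The plan is to split the error at the grid points into a \emph{Wong--Zakai part} and a \emph{numerical part}. Write $\tilde y$ for the solution of the auxiliary equation obtained from \eqref{eqn:RDE_with_drift} by replacing the tree-iterated integrals over each grid interval with the corresponding products of increments of $X$ — the same approximation that governs the Wong--Zakai error and whose B-series coefficients over a step are exactly the products of increments appearing in \eqref{eqn:RK_equations_simple}. By the triangle inequality the error of the Runge--Kutta iterates relative to $y$ is bounded by $\|y-\tilde y\|_{\infty,[0,T]}$ plus the error of the iterates relative to $\tilde y$ at the grid points. The first term is $\lesssim h^{r_0}$ by hypothesis, so everything reduces to showing that the numerical part is of order $h^{(p+1)\alpha-1}$.

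For that I would first prove a one-step estimate. On a generic grid interval $[s,t]$ of length $h$, expand $\tilde y(t)$ started at $\tilde y(s)$ by Theorem \ref{B-series_RDE} and expand the one-step map $\Phi_{s,t}$ of the simplified method \eqref{eqn:RK_equations_simple} by its defining B-series; both series are indexed by rooted decorated trees and both carry, as coefficients, products of increments of $X$ over $[s,t]$, so that a tree with $n$ nodes contributes a coefficient of size $\lesssim h^{n\alpha}$ (drift nodes only shrinking the contribution to $h^{n\alpha+j(1-\alpha)}$ since $h\le h^\alpha$). Being "of order $p$" means, via the order conditions of Theorem \ref{Thm_local_error}, that these coefficients coincide for every tree with at most $p$ nodes; hence the discrepancy is carried by trees with $p+1$ nodes together with the two B-series remainders at order $p$, and combining the remainder bound of Theorem \ref{B-series_RDE} (for $\tilde y$) with the elementary bound for the finite Runge--Kutta sum gives
\[
\bigl|\tilde y(t)-\Phi_{s,t}(\tilde y(s))\bigr|\le C\,h^{(p+1)\alpha},
\]
with $C$ depending only on $f_0,f$, on $p$, and on a \emph{uniform} bound for the rough-path norm of the approximating drivers on $[0,T]$.

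It then remains to pass from the local rate $(p+1)\alpha$ to the global rate $(p+1)\alpha-1$, which is precisely the local-to-global estimate provided separately in this paper: telescope the grid-point error of $\tilde y$ versus the iterates into a sum of $N\sim T/h$ one-step errors transported by the flow of the auxiliary equation, which is Lipschitz in its starting point with a mesh-independent constant because $f_0,f$ are bounded with bounded derivatives (so the exact and numerical trajectories stay in a fixed ball); summation yields $\lesssim N\,h^{(p+1)\alpha}=h^{(p+1)\alpha-1}$, and combining with the Wong--Zakai bound produces the rate $\min\{(p+1)\alpha-1,\,r_0\}$. The main obstacle is the one-step estimate, and within it the \emph{uniformity in the mesh} of all the constants: one must control the B-series remainders for the whole family of auxiliary solutions $\tilde y$, together with the boundedness of the Runge--Kutta iterates, purely in terms of a uniform bound on the approximating rough paths over $[0,T]$ rather than on the individual vanishing step lengths — which is exactly where the $\alpha$-H\"older regularity of $\mathbf X$ and the global boundedness of the vector fields are used.
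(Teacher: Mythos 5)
Your proposal is correct and follows essentially the same route as the paper: the error is split by the triangle inequality into the Wong--Zakai part $\sup_t|y(t)-y^h(t)|=\mathcal O(h^{r_0})$ and the numerical part relative to the solution driven by the piecewise-linear lift, the local one-step error $\mathcal O(h^{(p+1)\alpha})$ is obtained by matching B-series coefficients (Theorem \ref{Thm_local_error} together with Proposition \ref{branched_rp_WZ}), and the global rate $(p+1)\alpha-1$ follows from the telescoping/Lipschitz-flow argument of Proposition \ref{globale_rate}, with the mesh-uniformity of the constants handled exactly as you indicate, via the boundedness of $\mathbf X^h$ in $h$.
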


As an application, we can study the scheme when the driving process is a fractional Brownian motion with Hurst parameter $H \in (1/4,1/2]$. In this case, the Wong-Zakai error is arbitrarily close
to $2H -1/2$, cf. \cite{FR14}. We therefore obtain:

\begin{corollary*}
 For a fractional Brownian motion with Hurst parameter $H \in (1/4,1/2]$, a simplified Runge-Kutta scheme of order $3$ converges with rate arbitrarily close to $2H - 1/2$.
\end{corollary*}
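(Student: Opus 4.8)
The plan is to obtain the corollary as a direct specialization of the main theorem, feeding in two facts about fractional Brownian motion that are available in the literature. First, I would recall that a fractional Brownian motion $X = (X^1,\dots,X^d)$ with Hurst parameter $H \in (1/4,1/2]$ admits a canonical lift to an $\alpha$-H\"older rough path $\mathbf{X}$ for every $\alpha < H$: for $H \in (1/3,1/2]$ this is the level-$2$ geometric rough path obtained as the limit of piecewise-linear (or mollifier) approximations, while for $H \in (1/4,1/3]$ one uses the level-$3$ lift, which is still covered by the ``branched or geometric'' hypothesis of the main theorem. Since $f_0$ and $f$ are assumed smooth and bounded with bounded derivatives, all hypotheses of the theorem are satisfied for each fixed $\alpha < H$.

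Second, I would invoke \cite{FR14}, which shows that the Wong-Zakai error for approximating \eqref{eqn:RDE_with_drift} driven by such an $\mathbf{X}$ is of order $r_0$ for every $r_0 < 2H - 1/2$. Applying the main theorem with $p = 3$ then yields, for each $\alpha < H$ and each admissible $r_0 < 2H - 1/2$, a rate of convergence equal to $\min\{4\alpha - 1,\, r_0\}$.

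The remaining (elementary) point is to determine which of the two quantities realizes the minimum. Because $H > 1/4$, one has $4H - 1 > 2H - 1/2$, so given any target $\rho < 2H-1/2$ we may pick $r_0 \in (\rho, 2H-1/2)$ and then $\alpha < H$ close enough to $H$ that $4\alpha - 1 > r_0$; for such a choice $\min\{4\alpha-1, r_0\} = r_0 > \rho$. Since $\rho < 2H-1/2$ was arbitrary, the order-$3$ simplified Runge-Kutta scheme converges with rate arbitrarily close to $2H - 1/2$, as claimed.

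I do not expect a serious obstacle here: the two nontrivial inputs — the rough-path lift of fractional Brownian motion and the Wong-Zakai rate of \cite{FR14} — are external results, and the rest is just the comparison $4H-1 > 2H-1/2 \iff H > 1/4$. The one point worth making explicit is that order-$3$ simplified Runge-Kutta methods genuinely exist, i.e.\ that the algebraic order conditions up to order $3$ coming from Theorem \ref{Thm_local_error} and the subsequent remark are simultaneously solvable; this is part of the characterization carried out earlier in the paper and should be cited at this stage of the argument.
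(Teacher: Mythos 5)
Your argument is correct and coincides with the paper's own proof, which is the remark following Theorem \ref{thm:rates_simple_rk}: invoke \cite{FR14} for the Wong--Zakai rate $r_0$ arbitrarily close to $2H-1/2$, apply the global-rate theorem with $p=3$, and observe that $2H-1/2 < 4H-1$ precisely because $H>1/4$, so the minimum is realized by $r_0$. The only extra content you add is the (welcome) explicit bookkeeping of the choice of $\alpha$ and the reminder that order-$3$ schemes exist (Example \ref{order3_methods}).
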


We already pointed out that numerical schemes studied in the context of rough paths theory are mostly of Taylor-type. To our knowledge, the only exception is the article by
Hong, Huang and Wang \cite{HHW18} where a class of symplectic Runge-Kutta methods is considered to solve Hamiltonian equations driven by Gaussian processes.
Our article differs from  \cite{HHW18} in several regards. On the technical level, no $B$-series are used in  \cite{HHW18}, the authors have to prove all necessary
estimates ``by hand'' in the framework of geometric rough paths. Consequently, they do not provide general order conditions. For instance, no explicit Runge-Kutta methods are deduced in \cite{HHW18}.
Moreover, their approach is probably hard to generalize to schemes of arbitrary order, whereas our approach does not have any limitations in this regard. \smallskip

The article is structured as follows. In Section \ref{sec:full_RK}, we define the equations which can be expanded to obtain the desired $B$-series.
Section \ref{sec:BRP} explains the concept of branched rough paths, deduces the $B$-series representation for equation \eqref{eqn:RDE_with_drift} and discusses the local error of full Runge-Kutta methods.
Simplified Runge-Kutta methods are defined in Section \ref{sec:simple_RK}, where the necessary order conditions are derived to obtain the local error of the numerical scheme.
In Section \ref{sec:global_rates}, we deduce the global error for our methods. The article closes with numerical experiments presented in Section \ref{sec:numerics}. \smallskip

Let us finally mention that in the whole article, we will discard the drift in \eqref{eqn:RDE_with_drift} and consider equations of the form
\begin{align}\label{eqn:RDE_without_drift}
 dy(t) = f(y(t))\, d\mathbf{X}(t),\quad y(t_0) = y_0,
\end{align}
only which simplifies the exposition a lot. Furthermore, this is not a real limitation if we assume that the first component of $X$ is just the path $t \mapsto t$.

\section*{Notation and basic definitions}

\subsection*{General notation} Let $I$ be an interval in $\R$ and $V$ be a linear space. We call a function $X \colon I \to V$ a \emph{path} and $X_{s,t} := X(t) - X(s)$ an \emph{increment}. For a general two-parameter function $X \colon I \times I \to V$, we will often write $X_{s,t}$ instead of $X(s,t)$. If $|\cdot|$ is a norm on $V$ and $X \colon I \times I \to V$, we set
\begin{align*}
 \|X\|_{\alpha} := \|X\|_{\alpha;I} := \sup_{\stackrel{s,t \in I}{s \neq t}} \frac{|X_{s,t}|}{|t-s|^{\alpha}} 
\end{align*}
for $\alpha \in (0,1]$ and call it the \emph{$\alpha$-H\"older (semi-)norm} of $X$. For $x \in \R$, we use the notation $\lfloor x \rfloor := \max \{k \in \Z\,:\, k \leq x \}$.  
Let $\gamma > 0$ and $\gamma = [\gamma] + \{\gamma\}$ where $[\gamma]$ is an integer and $\{ \gamma \} \in (0,1]$. We will say that a vector field $f \colon \R^m \to \R^m$ belongs to the class $\operatorname{Lip}^{\gamma}$ if $f$ is $[\gamma]$-times continuously differentiable and the $[\gamma]$-th derivative is locally $\{\gamma\}$-H\"older continuous. $f$ is of class $\operatorname{Lip}^{\gamma}_b$ if, in addition, $f$ and all its derivatives are bounded and if the $[\gamma]$-th derivative is globally $\{\gamma\}$-H\"older continuous. More generally, a collection of vector fields $f = (f_1,\ldots,f_d)$ is of class $\operatorname{Lip}^{\gamma}$ resp. $\operatorname{Lip}^{\gamma}_b$ if every $f_i$, $i = 1, \ldots, d$, is of class $\operatorname{Lip}^{\gamma}$ resp. $\operatorname{Lip}^{\gamma}_b$.

\subsection*{Trees and the Connes-Kreimer Hopf algebra} Let $\mathcal{T}$ be the set of all rooted, labeled trees with vertex decorations from the set $\{1,\ldots,d\}$. We will use a recursive definition to construct trees. The empty tree will be denoted by $1$. We use the convention that $1 \notin \mathcal{T}$ and set $\mathcal{T}^0 := \mathcal{T} \cup \{1\}$. If $\tau_1,\ldots \tau_m \in \mathcal{T}^0$, $[\tau_1 \cdots \tau_m]_a$ denotes the tree obtained by attaching all trees $\tau_1,\ldots \tau_m$ to a new vertex which we label by $a  \in \{1,\ldots,d\}$. We use the notation $\bullet _a = [1]_a$ for the single vertex tree with label $a$. The order of the branches of the tree does not matter, i.e. $[\tau_{\sigma(1)} \cdots \tau_{\sigma(m)}]_a = [\tau_{1} \cdots \tau_{m}]_a$ holds for every permutation $\sigma$. If $\tau \in \mathcal{T}^0$ is a tree, $|\tau|$ denotes the number of vertices. The set $\mathcal{T}_N$ consists of all trees $\tau \in \mathcal{T}$ such that $|\tau| \leq N$ and we set $\mathcal{T}_N^0 := \mathcal{T}_N \cup \{1\}$. We let
\begin{align*}
 \mathcal{F} := \{ \tau_1 \cdots \tau_m\, :\, \tau_i \in \mathcal{T},\ m \in \N \}
\end{align*}
denote the set of unordered forests and define $\mathcal{F}^0 := \mathcal{F} \cup \{1\}$. The map $| \cdot |$ is extended to $\mathcal{F}^0$ by setting
\begin{align*}
 |\tau_1 \cdots \tau_m| := |\tau_1| + \ldots + |\tau_m|.
\end{align*}
As before, $\mathcal{F}_N$ contains all $\mathfrak{h} \in \mathcal{F}$ with $|\mathfrak{h}| \leq N$ and $\mathcal{F}_N^0 := \mathcal{F}_N \cup \{1\}$.

We define $(\mathcal{H},\cdot)$ to be the commutative polynomial algebra generated by the variables $\mathcal{T}$. Alternatively, we can view $\mathcal{H}$ as the real vector space spanned by the elements in $\mathcal{F}^0$. A coproduct $\Delta \colon \mathcal{H} \to \mathcal{H} {\otimes} \mathcal{H}$ is defined recursively by setting $\Delta 1 := 1 {\otimes} 1$ and 
\begin{align*}
 \Delta [\tau_1 \cdots \tau_m]_a := [\tau_1 \cdots \tau_m]_a {\otimes} 1 + (\operatorname{id} {\otimes} B_+^a) (\Delta \tau_1 \cdots \Delta \tau_m)
\end{align*}
for a tree $[\tau_1 \cdots \tau_m]_a$ where $B_+^a$ is the operator defined by $B_+^a(\tau_1 \cdots \tau_n) := [\tau_1 \cdots \tau_n]_a$ on the forest  $\tau_1 \cdots \tau_n$. We then extend the definition to forests by setting $\Delta (\tau_1 \cdots \tau_m) := \Delta \tau_1 \cdots \Delta \tau_m$ and eventually define $\Delta$ on $\mathcal{H}$ by linear extension. We will use Sweedler's notation 
\begin{align*}
 \Delta \mathfrak{h} = \sum_{(\mathfrak{h})} \mathfrak{h}^{(1)} \otimes \mathfrak{h}^{(2)}.
\end{align*}
One can also construct an antipode $S \colon \mathcal{H} \to \mathcal{H}$, i.e. a map which satisfies
\begin{align*}
 M(\operatorname{id} {\otimes} S) \Delta x =  M(S  {\otimes} \operatorname{id}) \Delta x = x
\end{align*}
for every $x \in \mathcal{H}$ where $M(x {\otimes} y) := xy$. Then, $(\mathcal{H},\cdot,\Delta,S)$ is called the \emph{Connes-Kreimer Hopf algebra} \cite{CK98}, cf. also \cite[Chapter 2]{HK15}. The dual Hopf algebra will be denoted by $(\mathcal{H}^*,\star,\delta,S^*)$. 

For a general account on Hopf algebras, cf. \cite{Swe69} or \cite{Abe80}. The Connes-Kreimer Hopf algebra is also discussed in \cite{Man06}.

% 
% 
% We will first recall all necessary definitions. 
% Let $\mathcal{T}$ be the set of all rooted, labeled trees with vertex decorations from the set $\{1,\ldots,d\}$. We will use a recursive definition to construct trees. 
% The empty tree will be denoted by $1$. We use the convention that $1 \notin \mathcal{T}$. If $\tau_1,\ldots \tau_m \in \mathcal{T} \cup \{1\}$, $[\tau_1 \cdots \tau_m]_a$ denotes the tree obtained by attaching all
% trees $\tau_1,\ldots \tau_m$ to a new vertex which we label by $a  \in \{1,\ldots,d\}$. We use the notation $\bullet _a = [1]_a$ for the single vertex tree with label $a$. 
% The order of the branches of the tree does not matter, i.e. $[\tau_{\sigma(1)} \cdots \tau_{\sigma(m)}]_a = [\tau_{1} \cdots \tau_{m}]_a$ holds for every permutation $\sigma$. 
% If $\tau \in \mathcal{T} \cup \{1\}$ is a tree, $|\tau|$ denotes the number of vertices. The set $\mathcal{T}_N$ consists of all trees $\tau \in \mathcal{T}$ such that $|\tau| \leq N$. 

\section{The full Runge-Kutta method}\label{sec:full_RK}

In this section, we will define $s$-stage Runge-Kutta methods. We follow the approach developed by Burrage and Burrage in \cite{BB00}. Let $Z^{(1)}, \ldots, Z^{(d)}$ be given $s\times s$-matrices and $z^{(1)},\ldots, z^{(d)}$ vectors in $\R^s$.
For given $y_n \in \R^m$, consider the equations% \textcolor{red}{Do we assume at this point that $f_0\equiv 0$ or shall we include the case $k=0$ in the sum below?}
\begin{align}\label{eqn:RK_equations}
 \begin{split}
      Y_i &= y_n + \sum_{k = 1}^d \sum_{j=1}^s Z^{(k)}_{ij} f_k(Y_j) \\
      y_{n+1} &= y_n + \sum_{k = 1}^d \sum_{i=1}^s z^{(k)}_i f_k(Y_i).
 \end{split}
\end{align}
In applications, $Z$ and $z$ can (and will) be random. Moreover, both values will depend on the step size $h>0$ of the numerical scheme \eqref{eqn:RK_equations}. Note that the first equation can be implicit in which case the existence of a solution is not guaranteed. In fact, this question will depend on the properties of the vector fields $f_i$. For instance, it can be shown that solutions exist in case that all vector fields are bounded, cf. \cite[Proposition 4.1]{HHW18}. However, we will not address this question here and just assume that solutions exist.

Set %\textcolor{blue}{$\Phi(\bullet_i)(h) := (1,\cdots,1)^T \in \R^s$} \textcolor{red}
{$\Phi(1)(h):= \mathbf 1_s := (1,\cdots,1)^T \in \R^s$} for $i = 1,\ldots,d$ and for a tree $\tau = [\tau_1 \cdots \tau_n]_i$,
\begin{align*}
 \Phi(\tau)(h) := \Pi_{j=1}^n (Z^{(i)} \Phi(\tau_j)(h)), \quad %\textcolor{blue}{a(\tau)(h) := \langle z^{(i)}, \Phi(\tau)(h) \rangle.} \quad 
 {a(\tau)(h) := \langle z^{(i)}, \Pi_{k=1}^n\Phi(\tau_k)(h) \rangle.}
\end{align*}
%\textcolor{red}{The expressions in blue above taken from Burrage and Burrage must be wrong. The corrected version can be found in red (see also other references like Debrabant or R\"o\ss ler)}
Notice that above, the product of two vectors has to be understood component-wise. 

\begin{definition}\label{defn:elem_diff}
 Let $f = (f_1,\ldots,f_d)$ be sufficiently smooth such that all derivatives below exist. For a tree $\tau \in \mathcal{T}^0$, we define the \emph{elementary differentials} $F(\tau) \colon \R^m \to \R^m$ recursively by setting 
 \begin{itemize}
  \item[(i)] $F(1)(y) := y$,
  \item[(ii)] $F(\bullet_i)(y) := f_i(y)$\quad and
  \item[(iii)] $F(\tau)(y) := f_i^{(n)}(y) (F(\tau_1)(y),\ldots,F(\tau_n)(y))$ for a tree $\tau = [\tau_1 \cdots \tau_n]_i$ where $f_i^{(n)}$ denotes the $n$-th total derivative of $f_i$
 \end{itemize}
 for $y \in \R^m$.

\end{definition}

% Following the notation in \cite[p. 1633]{BB00}, we set $F(1)(y) = y$ for the empty tree $1$ and any element $y \in \R^m$ and
% \begin{align*}
%  F(\tau)(y) = f_i^{(n)}(y) (F(\tau_1)(y),\ldots,F(\tau_n)(y))
% \end{align*}
% for a tree $\tau = [\tau_1 \cdots \tau_n]_i$ where the upper index $(n)$ denotes the $n$-th derivative. This notation implies $F(\bullet_i) = f_i$. 
% (MAYBE THIS NOTATION IS WRONG OR AT LEAST MISLEADING AND ONE SHOULD BETTER WRITE $F(\tau)y = f_i^{(n)}(y) (F(\tau_1)y,\ldots,F(\tau_n)y)$. Indeed, in this case, $\bullet_i =[1]_i$ indeed yields $F(\bullet_i)y = f_i(y)$ )
Next,we define some combinatoric quantities. For unlabeled trees, we set
\begin{align*}
 \gamma(1) = 0,\quad \gamma(\bullet) = 1,\quad \gamma([\tau_1,\ldots,\tau_k]) = |[\tau_1,\ldots,\tau_k]| \prod_{i=1}^k \gamma(\tau_i),
\end{align*}
and
\begin{align*}
 \beta(1) = 1, \quad \beta(\bullet) = 1, \quad \beta(\tau ) := \binom{|\tau| - 1}{|\tau_1|,\ldots, |\tau_k|} \frac{1}{r_1! \cdots r_q!} \prod_{j = 1}^k \beta(\tau_j)
\end{align*}
where $\tau = [\tau_1,\ldots,\tau_k] = [(\tau_1)^{r_1},\ldots,(\tau_q)^{r_q}]$, $\tau_1,\ldots,\tau_q$ being pairwise distinct trees. For labeled trees, we use the same definition. The main result in \cite{BB00} we are going to use is the following:
\begin{theorem}\label{B-series_RK}
 The Taylor series expansion of \eqref{eqn:RK_equations} is
 \begin{align}\label{eqn:taylor_BB}
  y_1 = y_0 + \sum_{\tau \in \mathcal{T}} \frac{\gamma(\tau)}{|\tau|!} \beta(\tau)  a(\tau)(h) F(\tau)(y_0).
 \end{align}
\end{theorem}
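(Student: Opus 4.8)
The plan is to run the classical $B$-series argument for Runge--Kutta methods, in the multidimensional, $h$-dependent form developed by Burrage and Burrage \cite{BB00} (see also \cite{butcher, hairer10sodI}). The only structural input needed is the one already encoded in Definition \ref{defn:elem_diff}(iii): applying the derivative $f_k^{(n)}(y_0)$ to a tuple of elementary differentials $\bigl(F(\tau_1)(y_0),\ldots,F(\tau_n)(y_0)\bigr)$ returns exactly $F\bigl([\tau_1\cdots\tau_n]_k\bigr)(y_0)$. This is what turns a Taylor expansion of the scheme into a sum indexed by trees, and it is the reason the combinatorics of $\mathcal{T}$ enters at all.

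First I would postulate that each internal stage has an expansion
\begin{align*}
 Y_i = y_0 + \sum_{\tau \in \mathcal{T}} \frac{\gamma(\tau)}{|\tau|!}\,\beta(\tau)\,\Phi(\tau)_i(h)\,F(\tau)(y_0),
\end{align*}
with $\Phi(\tau)(h) = \bigl(\Phi(\tau)_1(h),\ldots,\Phi(\tau)_s(h)\bigr)^{T}\in\R^{s}$ for the moment unknown and the series understood formally, graded by the number of vertices $|\tau|$. Existence and uniqueness of such an expansion I would obtain by induction on $|\tau|$: substituting the ansatz into the stage equations \eqref{eqn:RK_equations} and isolating the coefficient of $F(\tau)(y_0)$ expresses $\Phi(\tau)_i(h)$ through coefficients of strictly smaller trees only, since on the right-hand side $F(\tau)(y_0)$ can only be produced by $Z^{(k)}_{ij}f_k(Y_j)$ after $Y_j$ is expanded, and every branch so produced has fewer vertices than $\tau$; hence the recursion is well founded even when the first RK equation is implicit (one follows the solution branch reducing to $y_0$ as the $Z^{(k)}$ vanish). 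I would carry out everything at the level of formal tree-indexed symbols — equivalently, for a generic vector field $f$ for which the $F(\tau)(y_0)$ are linearly independent — and specialise to smooth $f$ at the end.

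The coefficients I would then compute by Taylor-expanding $f_k(Y_j) = \sum_{n\ge 0}\tfrac{1}{n!}f_k^{(n)}(y_0)\bigl((Y_j-y_0)^{\otimes n}\bigr)$, inserting the ansatz for $Y_j-y_0$, expanding the tensor powers, and applying Definition \ref{defn:elem_diff}(iii). A tree $\sigma=[\tau_1\cdots\tau_n]_k$, whose unordered branch forest I write as $(\rho_1)^{r_1}\cdots(\rho_q)^{r_q}$ with pairwise distinct $\rho_a$, then occurs in $f_k(Y_j)$ with weight $\tfrac{1}{r_1!\cdots r_q!}\bigl(\prod_{l}\tfrac{\gamma(\tau_l)}{|\tau_l|!}\beta(\tau_l)\bigr)\prod_{l}\Phi(\tau_l)_j(h)$, the factor $\tfrac{1}{r_1!\cdots r_q!}=\tfrac{1}{n!}\binom{n}{r_1,\ldots,r_q}$ counting the ordered $n$-tuples of branches that give the same forest. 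Forming $\sum_j Z^{(k)}_{ij}(\,\cdot\,)$ and matching with the ansatz for $Y_i$, the expansion is consistent precisely when
\begin{gather*}
 \frac{\gamma(\sigma)}{|\sigma|!}\,\beta(\sigma) = \frac{1}{r_1!\cdots r_q!}\prod_{l=1}^{n}\frac{\gamma(\tau_l)}{|\tau_l|!}\,\beta(\tau_l),\\
 \Phi(\sigma)(h)=Z^{(k)}\Bigl(\,\prod_{l=1}^{n}\Phi(\tau_l)(h)\,\Bigr),
\end{gather*}
the vector product being componentwise. The second relation is exactly the recursive definition of $\Phi$ stated before the theorem; the first drops out of the recursions for $\gamma$ and $\beta$, since the factor $|\sigma|$ in $\gamma(\sigma)=|\sigma|\prod_l\gamma(\tau_l)$ cancels one factor of $|\sigma|!$ and the surviving $(|\sigma|-1)!$ cancels the multinomial $\binom{|\sigma|-1}{|\tau_1|,\ldots,|\tau_n|}$ contained in $\beta(\sigma)$, with the factors $1/r_a!$ matching up. With the stage expansion in hand, I would finish by inserting it into $y_1 = y_0 + \sum_k\sum_i z^{(k)}_i f_k(Y_i)$ and rerunning the same Taylor computation with $z^{(k)}$ in place of the rows of $Z^{(k)}$: the tree $\sigma=[\tau_1\cdots\tau_n]_k$ then contributes $\tfrac{\gamma(\sigma)}{|\sigma|!}\beta(\sigma)\,\bigl\langle z^{(k)},\,\prod_{l}\Phi(\tau_l)(h)\bigr\rangle F(\sigma)(y_0)$, and $\bigl\langle z^{(k)},\prod_{l}\Phi(\tau_l)(h)\bigr\rangle = a(\sigma)(h)$ by definition, so summing over $\sigma\in\mathcal{T}$ yields \eqref{eqn:taylor_BB}.

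The combinatorial identity above collapses to a single line once the $\gamma,\beta$ recursions are unwound, so it is not the real obstacle. The parts that need genuine care are, first, setting up the formal $B$-series cleanly for a possibly implicit scheme and verifying that the grading by $|\tau|$ really does pin the coefficients down uniquely; and second, the bookkeeping when passing from ordered $n$-tuples of branches in the Taylor expansion to the unordered, repetition-counted forests labelling trees — this is exactly where the multinomial factors enter and where a stray factorial is easy to lose. Everything else is formal substitution.
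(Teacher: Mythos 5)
Your argument is correct and is essentially the proof of the result the paper cites for this theorem (\cite[Theorem 2.5]{BB00}); the paper's own ``proof'' is only that citation, and your reconstruction --- the formal stage ansatz graded by $|\tau|$, the Taylor expansion of $f_k(Y_j)$ with the multinomial count passing from ordered branch tuples to forests, and the cancellation $\frac{\gamma(\sigma)}{|\sigma|!}\beta(\sigma)=\frac{1}{r_1!\cdots r_q!}\prod_l\frac{\gamma(\tau_l)}{|\tau_l|!}\beta(\tau_l)$ --- is exactly the standard route. One caveat: the recursion you derive, $\Phi([\tau_1\cdots\tau_n]_i)=Z^{(i)}\bigl(\prod_l\Phi(\tau_l)\bigr)$, is the Burrage--Burrage form and is the one that makes $a(\tau)(h)$ the correct coefficient, but it is not literally ``the recursive definition stated before the theorem,'' which reads $\prod_l\bigl(Z^{(i)}\Phi(\tau_l)\bigr)$ and differs from yours whenever some branch $\tau_l$ itself carries two or more sub-branches (first at $|\tau|=4$, e.g.\ $[[\bullet_a\bullet_b]_j]_i$) --- this is an apparent typo in the paper's definition of $\Phi$ rather than a gap in your proof.
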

\begin{proof}
 \cite[Theorem 2.5]{BB00}.
\end{proof}
The coefficients in \eqref{B-series_RK} are sometimes noted in a different form which we recall now. Following \cite{Gub10}, we define the \emph{symmetry factor} $\sigma$ for unlabeled trees as
\begin{align*}
 \sigma(1) = 1, \quad \sigma(\bullet) = 1, \quad \sigma(\tau) = r_1! \cdots r_q! \prod_{j = 1}^k \sigma(\tau_j)
\end{align*}
where $\tau = [\tau_1,\ldots,\tau_k] = [(\tau_1)^{r_1},\ldots,(\tau_q)^{r_q}]$ with $\tau_1,\ldots,\tau_q$ being pairwise distinct trees. For labeled trees, the same definition is used.

\begin{lemma}\label{lemma_symmetry_factor}
  For every tree $\tau \in \mathcal{T}^0$,
 \begin{align*}
  \frac{1}{\sigma(\tau)} = \frac{\gamma(\tau)}{|\tau|!} \beta(\tau).
 \end{align*}

\end{lemma}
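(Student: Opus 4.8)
The plan is to prove the identity $\frac{1}{\sigma(\tau)} = \frac{\gamma(\tau)}{|\tau|!}\beta(\tau)$ by induction on the number of vertices $|\tau|$, using the recursive definitions of $\sigma$, $\gamma$ and $\beta$. The base cases $\tau = 1$ and $\tau = \bullet$ (resp. $\bullet_a$) are immediate: for $\tau = 1$ we get $\frac{1}{\sigma(1)} = 1 = \frac{\gamma(1)}{|1|!}\beta(1)$ only if we interpret the degenerate expression correctly, so more honestly the first genuine case is $\tau = \bullet$, where $\sigma(\bullet) = 1$, $\gamma(\bullet) = 1$, $|\bullet|! = 1$, $\beta(\bullet) = 1$, and the identity holds trivially. (The statement is really only needed for $\tau \in \mathcal{T}$; I would just remark that the $\tau = 1$ case is vacuous or conventional.)

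For the inductive step, write $\tau = [\tau_1 \cdots \tau_k] = [(\tau_1)^{r_1} \cdots (\tau_q)^{r_q}]$ with $\tau_1, \ldots, \tau_q$ pairwise distinct, so $k = r_1 + \cdots + r_q$ and $|\tau| = 1 + \sum_{j=1}^k |\tau_j|$. Plugging in the recursions:
\begin{align*}
 \frac{\gamma(\tau)}{|\tau|!}\beta(\tau)
 &= \frac{|\tau| \prod_{i=1}^k \gamma(\tau_i)}{|\tau|!} \cdot \binom{|\tau|-1}{|\tau_1|, \ldots, |\tau_k|} \frac{1}{r_1! \cdots r_q!} \prod_{j=1}^k \beta(\tau_j) \\
 &= \frac{1}{(|\tau|-1)!} \cdot \frac{(|\tau|-1)!}{|\tau_1|! \cdots |\tau_k|!} \cdot \frac{1}{r_1! \cdots r_q!} \prod_{i=1}^k \gamma(\tau_i)\beta(\tau_i) \\
 &= \frac{1}{r_1! \cdots r_q!} \prod_{i=1}^k \frac{\gamma(\tau_i)\beta(\tau_i)}{|\tau_i|!}.
\end{align*}
By the induction hypothesis, $\frac{\gamma(\tau_i)\beta(\tau_i)}{|\tau_i|!} = \frac{1}{\sigma(\tau_i)}$ for each $i$, so the right-hand side equals $\frac{1}{r_1! \cdots r_q! \prod_{i=1}^k \sigma(\tau_i)} = \frac{1}{\sigma(\tau)}$ by the recursive definition of $\sigma$. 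This closes the induction. Note the labels play no role at any step, so the labeled case follows verbatim from the unlabeled computation.

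I do not expect any serious obstacle here; the proof is a direct bookkeeping exercise. The only point requiring a little care is the cancellation of the multinomial coefficient against the factorials: one must check that $\binom{|\tau|-1}{|\tau_1|,\ldots,|\tau_k|}$ really is $\frac{(|\tau|-1)!}{|\tau_1|!\cdots|\tau_k|!}$, which uses $\sum_{i=1}^k |\tau_i| = |\tau|-1$ — i.e. the $k$ branch sizes partition the non-root vertices exactly. A secondary subtlety is making sure the grouping of equal branches ($r_1!\cdots r_q!$) is handled consistently between $\beta$ and $\sigma$; since both recursions isolate the same factor $\frac{1}{r_1!\cdots r_q!}$ (for $\beta$) resp. $r_1!\cdots r_q!$ (for $\sigma$), they match up cleanly, and the products over $i=1,\ldots,k$ versus over distinct trees $j=1,\ldots,q$ with multiplicities agree. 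I would also briefly note that the identity is exactly the classical relation between Butcher's density/symmetry coefficients, so the result is not new, but include the short proof for completeness.
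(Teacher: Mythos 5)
Your proof is correct and is essentially the paper's own argument: an induction using the recursive definitions, with the multinomial coefficient cancelling against $\prod_i |\tau_i|!$ via $\sum_i |\tau_i| = |\tau|-1$ and the $r_1!\cdots r_q!$ factors matching between $\beta$ and $\sigma$; the paper merely arranges the algebra as a computation of $\beta(\tau)$ rather than of $\frac{\gamma(\tau)}{|\tau|!}\beta(\tau)$, and inducts on height rather than vertex count. Your side remark about $\tau=1$ is well taken (with $\gamma(1)=0$ the identity is degenerate there, and the paper asserts it without comment), but this does not affect the statement for $\tau\in\mathcal{T}$, which is all that is used.
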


\begin{proof}
We prove this lemma by induction on the height of $\tau$ for unlabeled trees. The equality is true for $\tau = 1$ and $\tau = \bullet$. Let us assume that the claim is true for each sub-tree of 
$\tau = [\tau_1,\ldots,\tau_k] = [(\tau_1)^{r_1},\ldots,(\tau_q)^{r_q}]$. Then, we have
\begin{align*}
 \beta(\tau) &= \frac{(|\tau| - 1)!}{|\tau_1|! \cdots |\tau_k|!} \frac{1}{r_1! \cdots r_q!} \prod_{j = 1}^k \beta(\tau_j) \\
 &= \frac{(|\tau| - 1)!}{|\tau_1|! \cdots |\tau_k|!} \frac{1}{r_1! \cdots r_q!} \prod_{j = 1}^k \frac{|\tau_j|!}{\gamma(\tau_j)} \frac{1}{\sigma(\tau_j)} \\
 &= \frac{(|\tau| - 1)!}{r_1! \cdots r_q!} \frac{|\tau|}{\gamma(\tau)} \prod_{j = 1}^k \frac{1}{\sigma(\tau_j)} \\
 &= \frac{|\tau|!}{\gamma(\tau)} \frac{1}{\sigma(\tau)}.
\end{align*}

\end{proof}

\section{Branched rough paths and B-series expansion for rough differential equations}\label{sec:BRP}

In this section, we recall the concept of a \emph{branched rough paths} introduced by Gubinelli in \cite{Gub10}. We use a similar approach and notation as Hairer and Kelly in \cite{HK15}. Our main goal is to deduce the $B$-series expansion of \eqref{eqn:RDE_without_drift} which we will eventually achieve in Theorem \ref{B-series_RDE}. Note that Hairer and Kelly state a similar result in \cite[Proposition 3.8]{HK15}. However, we can not use their result for two reasons: First, it is not quantitative, i.e. the order of the truncation error is not specified. Second, it is not entirely correct since the expansion in \cite[Proposition 3.8]{HK15} lacks the symmetry factor. The proof of \cite[Proposition 3.8]{HK15} was corrected in the Master thesis of Rosa Prei{\ss}, and we are grateful to her for providing us with the corrected version.

\begin{definition}\label{defn_branched_rp}
 Let $\alpha \in (0,1]$. A \emph{$\alpha$-branched rough path} is a map $\mathbf{X} \colon [0,T] \times [0,T] \to \mathcal{H}^*$ such that 
 \begin{enumerate}
  \item for all $s, t \in [0,T]$ and all $\mathfrak{h}_1,\mathfrak{h}_2 \in \mathcal{H}$,
  \begin{align*}
   \langle \mathbf{X}_{s,t}, \mathfrak{h}_1 \rangle \langle \mathbf{X}_{s,t},\mathfrak{h}_2 \rangle = \langle \mathbf{X}_{s,t}, \mathfrak{h}_1 \cdot \mathfrak{h}_2 \rangle
  \end{align*}
  
  \item for all $s,u,t \in [0,T]$,
  \begin{align*}
   \mathbf{X}_{s,t} = \mathbf{X}_{s,u} \star \mathbf{X}_{u,t}
  \end{align*}
  
  \item for every $\tau \in \mathcal{T}$,
  \begin{align*}
   \sup_{s \neq t} \frac{ | \langle \mathbf{X}_{s,t}, \tau \rangle |}{|t-s|^{\alpha |\tau|}} < \infty.
  \end{align*}

 \end{enumerate}
 
 The space of $\alpha$-branched rough paths will be denoted by $\mathscr{C}^{\alpha}([0,T],\R^d)$. It is a complete metric space with metric
 \begin{align*}
  \varrho_{\alpha}(\mathbf{X},\mathbf{Y}) := \sum_{\tau \in \mathcal{T}_N} \sup_{s \neq t} \frac{ | \langle \mathbf{X}_{s,t} - \mathbf{Y}_{s,t},\tau \rangle |}{|t-s|^{\alpha |\tau|}}
 \end{align*}
 where $N = \lfloor 1/\alpha \rfloor$.

\end{definition}

\begin{example}\label{example_smooth}
 Let $X = (X^1 ,\ldots, X^d) \colon [0,T] \to \R^d$ be a piece-wise $C^1$-path. We define $\mathbf{X}$ by setting
 \begin{align*}
  \langle \mathbf{X}_{s,t}, \bullet_i \rangle = X^i(t) - X^i(s) \quad \text{and} \quad  \langle \mathbf{X}_{s,t}, [\tau_1 \cdots \tau_n]_i \rangle = \int_s^t \langle \mathbf{X}_{s,u}, \tau_1 \rangle \cdots \langle \mathbf{X}_{s,u}, \tau_n \rangle\, dX^i(u)
 \end{align*}
 for any tree $\tau = [\tau_1 \cdots \tau_n]_i \in \mathcal{T}$ and
 \begin{align*}
  \langle \mathbf{X}_{s,t}, \tau_1 \cdots \tau_n \rangle = \langle \mathbf{X}_{s,t}, \tau_1 \rangle \cdots \langle \mathbf{X}_{s,t}, \tau_n \rangle
 \end{align*}
 for any forest $\tau_1 \cdots \tau_n$. We can now extend $\mathbf{X}$ linearly to a map on $\mathcal{H}$ and therefore obtain a map $\mathbf{X} \colon [0,T] \times [0,T] \to \mathcal{H}^*$. It can be shown that this map defines a $\alpha$-branched rough path for every $\alpha \in (0,1]$. If $X$ is $\alpha$-H\"older continuous for some $\alpha \in (1/2,1]$, we can use the Young integral to define $\mathbf{X}$ as above. In this case, it can be shown that $\mathbf{X}$ defines a $\alpha'$-branched rough path for every $\alpha' \in (0,\alpha]$. In this case, $\mathbf{X}$ is called the \emph{natural lift} of the (smooth) path $X$ to a branched rough path.

\end{example}

Next, we define a class of paths which we can integrate against a branched rough path.

\begin{definition}
 Let $\mathbf{X}$ be a $\alpha$-branched rough path and $N = \lfloor 1/\alpha \rfloor$. A path $\mathbf{Z} \colon [0,T] \to \mathcal{H}_{N-1}$ satisfying
 \begin{align}\label{eqn:controlled}
  \langle \mathfrak{h}, \mathbf{Z}(t) \rangle = \langle \mathbf{X}_{s,t} \star \mathfrak{h}, \mathbf{Z}(s) \rangle + R^\mathfrak{h}_{s,t}
 \end{align}
 for each $\mathfrak{h} \in \mathcal{F}^0_{N-1}$ where $|R^\mathfrak{h}_{s,t}| \leq C |t-s|^{(N - |\mathfrak{h}|)\alpha}$ is called \emph{controlled by $\mathbf{X}$}. We will also say that $\mathbf{Z}$ is a \emph{controlled path above the path $t \mapsto Z(t) := \langle 1, \mathbf{Z}(t) \rangle$}. More generally, we call a path $\mathbf{Z} \colon [0,T] \to (\mathcal{H}_{N-1})^m$ \emph{controlled by $\mathbf{X}$} if \eqref{eqn:controlled} holds, understood as an equation in $\R^m$. The space of controlled paths $\mathbf{Z} \colon [0,T] \to (\mathcal{H}_{N-1})^m$ will be denoted by $\mathcal{Q}_{\mathbf{X}}(\R^m)$ which is a Banach space with the norm
 \begin{align*}
  \| \mathbf{Z} \|_{\mathcal{Q}_{\mathbf{X}}(\R^m)} := |\mathbf{Z}(0)| + \sum_{\mathfrak{h} \in \mathcal{F}^0_{N-1}} \|R^\mathfrak{h}\|_{(N - |\mathfrak{h}|)\alpha}.
 \end{align*}

\end{definition}

The following lemma is given as an exercise in \cite{HK15}. We provide a full proof here for the reader's convenience.

\begin{lemma}\label{lemma:check_sewing}
 Let $\mathbf{X}$ be a $\alpha$-branched rough path and $\mathbf{Z}$ be controlled by $\mathbf{X}$. Set 
 \begin{align*}
  \tilde{Z}_{s,t} := \sum_{\mathfrak{h} \in \mathcal{F}_{N-1}^0} \langle \mathfrak{h}, \mathbf{Z}(s) \rangle \langle \mathbf{X}_{s,t},[\mathfrak{h}]_{i} \rangle.
 \end{align*}
 Then,
 \begin{align*}
  \tilde{Z}_{s,t} - \tilde{Z}_{s,u} - \tilde{Z}_{u,t} = - \sum_{\mathfrak{h} \in \mathcal{F}^0_{N-1}} \langle \mathbf{X}_{u,t}, [\mathfrak{h}]_i \rangle R^{\mathfrak{h}}_{s,u}.
 \end{align*}

\end{lemma}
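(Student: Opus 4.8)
The plan is to compute $\tilde{Z}_{s,t} - \tilde{Z}_{s,u} - \tilde{Z}_{u,t}$ directly by expanding each term via its definition and using the two structural properties of branched rough paths — multiplicativity of $\mathbf{X}$ against the product in $\mathcal{H}$ (property (1)) and Chen's relation $\mathbf{X}_{s,t} = \mathbf{X}_{s,u} \star \mathbf{X}_{u,t}$ (property (2)) — together with the controlled-path relation \eqref{eqn:controlled}. First I would rewrite $\tilde Z_{s,t}$ by inserting Chen's relation, so that $\langle \mathbf{X}_{s,t}, [\mathfrak h]_i\rangle = \langle \mathbf{X}_{s,u} \star \mathbf{X}_{u,t}, [\mathfrak h]_i\rangle$, and then use the coproduct on the tree $[\mathfrak h]_i$. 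Since $\Delta [\mathfrak h]_i = [\mathfrak h]_i \otimes 1 + (\operatorname{id}\otimes B_+^i)(\Delta \mathfrak h)$, this splits $\langle \mathbf{X}_{s,t},[\mathfrak h]_i\rangle$ into the term $\langle \mathbf{X}_{s,u},[\mathfrak h]_i\rangle$ plus a sum over the splittings of $\mathfrak h$ of terms of the form $\langle \mathbf{X}_{s,u}, \mathfrak h^{(1)}\rangle \langle \mathbf{X}_{u,t}, [\mathfrak h^{(2)}]_i\rangle$.

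Next I would substitute this back into $\tilde Z_{s,t}$ and group terms. The piece $\sum_{\mathfrak h}\langle \mathfrak h,\mathbf Z(s)\rangle\langle \mathbf{X}_{s,u},[\mathfrak h]_i\rangle$ is exactly $\tilde Z_{s,u}$, so it cancels. For the remaining double sum, I would use multiplicativity (property (1)) to write $\langle \mathfrak h, \mathbf Z(s)\rangle \langle \mathbf{X}_{s,u},\mathfrak h^{(1)}\rangle$ — wait, more carefully: I need to reorganize the sum over $\mathfrak h$ and its splitting $\mathfrak h^{(1)}\otimes \mathfrak h^{(2)}$ so that the outer variable becomes $\mathfrak g := \mathfrak h^{(2)}$. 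For fixed $\mathfrak g$, summing $\langle \mathfrak h,\mathbf Z(s)\rangle \langle \mathbf{X}_{s,u},\mathfrak h^{(1)}\rangle$ over all $\mathfrak h$ with $\mathfrak g$ appearing in the right leg of $\Delta \mathfrak h$ is precisely $\langle \mathbf{X}_{s,u}\star \mathfrak g, \mathbf Z(s)\rangle$ by the definition of the convolution product $\star$ on $\mathcal H^*$ (this is the standard "$\langle a \star b, x\rangle = \langle a\otimes b, \Delta x\rangle$" identity, applied with the coproduct of $[\mathfrak g]_i$ reproducing the splitting with $B_+^i$ on the right). Then the controlled-path equation \eqref{eqn:controlled} says $\langle \mathbf{X}_{s,u}\star \mathfrak g, \mathbf Z(s)\rangle = \langle \mathfrak g, \mathbf Z(u)\rangle - R^{\mathfrak g}_{s,u}$.

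Substituting this in, the double sum becomes $\sum_{\mathfrak g \in \mathcal F_{N-1}^0} \big(\langle \mathfrak g,\mathbf Z(u)\rangle - R^{\mathfrak g}_{s,u}\big)\langle \mathbf{X}_{u,t},[\mathfrak g]_i\rangle$. The first part is exactly $\tilde Z_{u,t}$, which cancels the $-\tilde Z_{u,t}$ in the expression we are computing, leaving precisely $-\sum_{\mathfrak g\in\mathcal F_{N-1}^0}\langle\mathbf{X}_{u,t},[\mathfrak g]_i\rangle R^{\mathfrak g}_{s,u}$, which is the claimed identity. The main obstacle, and the step requiring the most care, is the bookkeeping in the reindexing step: matching the sum $\sum_{\mathfrak h}\sum_{(\mathfrak h)}\langle \mathfrak h,\mathbf Z(s)\rangle\langle\mathbf{X}_{s,u},\mathfrak h^{(1)}\rangle\langle\mathbf{X}_{u,t},[\mathfrak h^{(2)}]_i\rangle$ against $\sum_{\mathfrak g}\langle\mathbf{X}_{s,u}\star\mathfrak g,\mathbf Z(s)\rangle\langle\mathbf{X}_{u,t},[\mathfrak g]_i\rangle$. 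This requires verifying that the coproduct formula for $\Delta[\mathfrak h]_i$ — restricted modulo terms of degree $> N-1$ in the relevant leg, since $\mathbf Z$ takes values in $\mathcal H_{N-1}$ — produces exactly the pairs $(\mathfrak h^{(1)}, \mathfrak h^{(2)})$ with $\mathfrak h^{(1)}\in\mathcal F^0$ a forest and $\mathfrak h^{(2)}\in\mathcal F^0_{N-1}$, and that no boundary terms are lost. One should also note that the $\mathfrak g = 1$ term is harmless: there $[\mathfrak g]_i = \bullet_i$ and $R^1_{s,u} = 0$ (or it contributes to both sides consistently), so it can be kept in or excluded from the sums without changing the result.
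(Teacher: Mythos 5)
Your proposal is correct and follows essentially the same route as the paper's proof: apply Chen's relation together with $\Delta[\mathfrak h]_i = [\mathfrak h]_i\otimes 1 + (\operatorname{id}\otimes B_+^i)\Delta\mathfrak h$ to split $\tilde Z_{s,t}$, cancel $\tilde Z_{s,u}$, reindex the double sum over $\mathfrak g=\mathfrak h^{(2)}$ to recognize $\langle \mathbf{X}_{s,u}\star\mathfrak g,\mathbf Z(s)\rangle$, and conclude via the controlled-path relation. The bookkeeping concern you flag is benign, since $|\mathfrak h^{(2)}|\le|\mathfrak h|\le N-1$ keeps all right legs inside $\mathcal F^0_{N-1}$.
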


\begin{proof}
 Let $\tilde{\mathfrak{h}} \in \mathcal{F}^0_{N-1}$. Then,
 \begin{align*}
  \langle \mathbf{X}_{s,t} \star \tilde{\mathfrak{h}}, \mathbf{Z}(s) \rangle &= \sum_{\mathfrak{h} \in \mathcal{F}_{N-1}^0} \langle \mathbf{X}_{s,t} \star \tilde{\mathfrak{h}}, \mathfrak{h} \rangle \langle \mathfrak{h}, \mathbf{Z}(s) \rangle \\
  &= \sum_{\mathfrak{h} \in \mathcal{F}_{N-1}^0} \langle \mathbf{X}_{s,t} \otimes \tilde{\mathfrak{h}}, \Delta \mathfrak{h} \rangle \langle \mathfrak{h}, \mathbf{Z}(s) \rangle \\
  &= \sum_{\mathfrak{h} \in \mathcal{F}_{N-1}^0} \sum_{(\mathfrak{h})} \langle \mathbf{X}_{s,t} \otimes \tilde{\mathfrak{h}}, \mathfrak{h}^{(1)} {\otimes} \mathfrak{h}^{(2)} \rangle \langle \mathfrak{h}, \mathbf{Z}(s) \rangle \\
  &= \sum_{\mathfrak{h} \in \mathcal{F}_{N-1}^0} \sum_{(\mathfrak{h})} \mathbf{1}_{\{\mathfrak{h}^{(2)} = \tilde{\mathfrak{h}}\}} \langle \mathbf{X}_{s,t}, \mathfrak{h}^{(1)} \rangle \langle \mathfrak{h}, \mathbf{Z}(s) \rangle.
 \end{align*}
    For $\mathfrak{h} \in \mathcal{F}^0_{N-1}$,
    \begin{align*}
     \langle \mathbf{X}_{s,t}, [\mathfrak{h}]_i \rangle &= \langle \mathbf{X}_{s,u} \star \mathbf{X}_{u,t}, [\mathfrak{h}]_i \rangle \\
     &= \langle \mathbf{X}_{s,u} \otimes \mathbf{X}_{u,t}, \Delta [\mathfrak{h}]_i \rangle \\
     &= \langle \mathbf{X}_{s,u} \otimes \mathbf{X}_{u,t}, [\mathfrak{h}]_i \otimes 1 \rangle + \langle \mathbf{X}_{s,u} \otimes \mathbf{X}_{u,t}, (\operatorname{id} \otimes B_+^i) \Delta \mathfrak{h} \rangle \\
     &= \langle \mathbf{X}_{s,u} , [\mathfrak{h}]_i  \rangle + \sum_{(\mathfrak{h})} \langle \mathbf{X}_{s,u}, \mathfrak{h}^{(1)} \rangle \langle \mathbf{X}_{u,t}, [\mathfrak{h}^{(2)}]_i \rangle.
    \end{align*}
    
    It follows that
    \begin{align*}
     \tilde{Z}_{s,t} - \tilde{Z}_{s,u} - \tilde{Z}_{u,t} &= \sum_{\mathfrak{h} \in \mathcal{F}^0_{N-1}} \langle \mathfrak{h}, \mathbf{Z}(s) \rangle \left( \langle \mathbf{X}_{s,t}, [\mathfrak{h}]_i \rangle - \langle \mathbf{X}_{s,u}, [\mathfrak{h}]_i \rangle \right) - \langle \mathfrak{h}, \mathbf{Z}(u) \rangle \langle \mathbf{X}_{u,t},[\mathfrak{h}]_{i} \rangle \\
     &= \sum_{\mathfrak{h} \in \mathcal{F}^0_{N-1}} \sum_{(\mathfrak{h})} \langle \mathfrak{h}, \mathbf{Z}(s) \rangle \langle \mathbf{X}_{s,u}, \mathfrak{h}^{(1)} \rangle \langle \mathbf{X}_{u,t}, [\mathfrak{h}^{(2)}]_i \rangle - \langle \mathfrak{h}, \mathbf{Z}(u) \rangle \langle \mathbf{X}_{u,t},[\mathfrak{h}]_{i} \rangle \\
     &= \sum_{\mathfrak{h} \in \mathcal{F}^0_{N-1}} \sum_{(\mathfrak{h})} \sum_{\tilde{\mathfrak{h}} \in \mathcal{F}^0_{N-1}} \mathbf{1}_{\{\mathfrak{h}^{(2)} = \tilde{\mathfrak{h}}\}} \langle \mathfrak{h}, \mathbf{Z}(s) \rangle \langle \mathbf{X}_{s,u}, \mathfrak{h}^{(1)} \rangle \langle \mathbf{X}_{u,t}, [\tilde{\mathfrak{h}}]_i \rangle - \langle \mathfrak{h}, \mathbf{Z}(u) \rangle \langle \mathbf{X}_{u,t},[\mathfrak{h}]_{i} \rangle \\
     &= \sum_{\tilde{\mathfrak{h}} \in \mathcal{F}^0_{N-1}} \sum_{\mathfrak{h} \in \mathcal{F}^0_{N-1}}\sum_{(\mathfrak{h})}  \mathbf{1}_{\{\mathfrak{h}^{(2)} = \tilde{\mathfrak{h}}\}} \langle \mathfrak{h}, \mathbf{Z}(s) \rangle \langle \mathbf{X}_{s,u}, \mathfrak{h}^{(1)} \rangle \langle \mathbf{X}_{u,t}, [\tilde{\mathfrak{h}}]_i \rangle - \langle \tilde{\mathfrak{h}}, \mathbf{Z}(u) \rangle \langle \mathbf{X}_{u,t},[\tilde{\mathfrak{h}}]_{i} \rangle \\
     &= \sum_{\tilde{\mathfrak{h}} \in \mathcal{F}^0_{N-1}} \langle \mathbf{X}_{u,t}, [\tilde{\mathfrak{h}}]_i \rangle \left( \langle \mathbf{X}_{s,u} \star \tilde{\mathfrak{h}}, \mathbf{Z}(s) \rangle - \langle \tilde{\mathfrak{h}}, \mathbf{Z}(u) \rangle \right) \\
     &= - \sum_{\tilde{\mathfrak{h}} \in \mathcal{F}^0_{N-1}} \langle \mathbf{X}_{u,t}, [\tilde{\mathfrak{h}}]_i \rangle R^{\tilde{\mathfrak{h}}}_{s,u}.
    \end{align*}

\end{proof}

\begin{theorem}[Gubinelli]
 Let $T > 0$, $\mathbf{X}$ be a $\alpha$-branched rough path and $\mathbf{Z}$ be controlled by $\mathbf{X}$. Then,
 \begin{align*}
  \int_s^t Z(r)\, d\mathbf{X}^i(r) := \lim_{|\mathcal{P}| \to 0} \sum_{[u,v] \in \mathcal{P}} \tilde{Z}_{u,v}
 \end{align*}
 exists for every $i \in \{1,\ldots, d\}$ and $[s,t] \subseteq [0,T]$ where
 \begin{align*}
  \tilde{Z}_{u,v} := \sum_{\mathfrak{h} \in \mathcal{F}_{N-1}^0} \langle \mathfrak{h}, \mathbf{Z}(u) \rangle \langle \mathbf{X}_{u,v},[\mathfrak{h}]_{i} \rangle
 \end{align*}
 and $\mathcal{P}$ denotes a partition of $[s,t]$ with mesh size $|\mathcal{P}|$. Moreover,there exists a constant $C$ depending only on $\alpha$ and $T$ such that
  \begin{align*}
   \left|\int_s^t Z(r)\, d\mathbf{X}^i(r) - \tilde{Z}_{s,t} \right| \leq C |t-s|^{(N+1)\alpha} \sum_{\mathfrak{h} \in \mathcal{F}^0_{N-1}} \| \langle \mathbf{X}_{\cdot,\cdot}, [\mathfrak{h}]_i \rangle \|_{(|\mathfrak{h}| + 1)\alpha;[s,t]} \| R^{\mathfrak{h}} \|_{(N - |\mathfrak{h}|)\alpha; [s,t]}.
  \end{align*}

\end{theorem}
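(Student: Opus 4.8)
The plan is to recognize the two-parameter object $\Xi_{s,t} := \tilde{Z}_{s,t}$ as a germ to which the Sewing Lemma applies, and to read off both conclusions from it. The essential input is already contained in Lemma~\ref{lemma:check_sewing}, which computes the defect $\delta\Xi_{s,u,t} := \Xi_{s,t} - \Xi_{s,u} - \Xi_{u,t}$ exactly as $-\sum_{\mathfrak{h}\in\mathcal{F}^0_{N-1}} \langle \mathbf{X}_{u,t},[\mathfrak{h}]_i\rangle R^{\mathfrak{h}}_{s,u}$. The first step is to turn this identity into a quantitative estimate. For $\mathfrak{h}\in\mathcal{F}^0_{N-1}$ one has $|\mathfrak{h}|\le N-1$, so the tree $[\mathfrak{h}]_i$ has $|[\mathfrak{h}]_i| = |\mathfrak{h}|+1 \le N$ vertices and axiom~(3) of Definition~\ref{defn_branched_rp} gives $|\langle \mathbf{X}_{u,t},[\mathfrak{h}]_i\rangle| \le \|\langle \mathbf{X}_{\cdot,\cdot},[\mathfrak{h}]_i\rangle\|_{(|\mathfrak{h}|+1)\alpha;[s,t]}\,|t-u|^{(|\mathfrak{h}|+1)\alpha}$, while the defining property of a controlled path gives $|R^{\mathfrak{h}}_{s,u}| \le \|R^{\mathfrak{h}}\|_{(N-|\mathfrak{h}|)\alpha;[s,t]}\,|u-s|^{(N-|\mathfrak{h}|)\alpha}$. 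For $s \le u \le t$ one bounds $|t-u|$ and $|u-s|$ by $|t-s|$ and adds the exponents; since $(|\mathfrak{h}|+1)\alpha + (N-|\mathfrak{h}|)\alpha = (N+1)\alpha$ independently of $\mathfrak{h}$, summing over the finite set $\mathcal{F}^0_{N-1}$ yields
\begin{align*}
 |\delta\Xi_{s,u,t}| \le |t-s|^{(N+1)\alpha} \sum_{\mathfrak{h}\in\mathcal{F}^0_{N-1}} \|\langle \mathbf{X}_{\cdot,\cdot},[\mathfrak{h}]_i\rangle\|_{(|\mathfrak{h}|+1)\alpha;[s,t]}\,\|R^{\mathfrak{h}}\|_{(N-|\mathfrak{h}|)\alpha;[s,t]}.
\end{align*}

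The second step is to note that the exponent is admissible: from $N = \lfloor 1/\alpha\rfloor$ we get $N \le 1/\alpha < N+1$, hence $(N+1)\alpha > 1$. Together with the continuity of $(s,t)\mapsto\Xi_{s,t}$ (inherited from the H\"older continuity of $\mathbf{X}$ and from $\mathbf{Z}$ being controlled), this places us exactly in the hypotheses of the Sewing Lemma. Invoking it produces a unique map $(s,t)\mapsto\mathcal{I}_{s,t}$ which is additive, $\mathcal{I}_{s,t} = \mathcal{I}_{s,u}+\mathcal{I}_{u,t}$, and satisfies $|\mathcal{I}_{s,t}-\Xi_{s,t}| \le C\,\|\delta\Xi\|_{(N+1)\alpha;[s,t]}\,|t-s|^{(N+1)\alpha}$ with $C = C_{(N+1)\alpha}$, realized as $\mathcal{I}_{s,t} = \lim_{|\mathcal{P}|\to0}\sum_{[u,v]\in\mathcal{P}}\Xi_{u,v}$. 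Defining $\int_s^t Z(r)\,d\mathbf{X}^i(r) := \mathcal{I}_{s,t}$ gives the existence statement, and combining the remainder bound with the estimate on $\|\delta\Xi\|_{(N+1)\alpha;[s,t]}$ from the first step yields the displayed inequality (the constant depending only on $\alpha$, with a harmless extra dependence on $T$ if one prefers to absorb lengths).

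If one wants the argument to be self-contained rather than quoting the Sewing Lemma, the third step is to prove it inline in the standard way. Given a partition $\mathcal{P}$ of $[s,t]$ into $n\ge2$ intervals, an averaging argument shows there is always an interior point $u$ whose removal merges two adjacent intervals into one of combined length at most $\tfrac{2}{n-1}|t-s|$; deleting it changes $\sum_{\mathcal{P}}\Xi$ by a single defect term, hence by at most $\|\delta\Xi\|_{(N+1)\alpha;[s,t]}\bigl(\tfrac{2}{n-1}\bigr)^{(N+1)\alpha}|t-s|^{(N+1)\alpha}$. Iterating down to the trivial partition $\{[s,t]\}$ and summing the series $\sum_{n\ge2}(n-1)^{-(N+1)\alpha}$, which converges precisely because $(N+1)\alpha>1$, shows that $\sum_{\mathcal{P}}\Xi$ stays within a fixed multiple of $\|\delta\Xi\|_{(N+1)\alpha;[s,t]}|t-s|^{(N+1)\alpha}$ of $\Xi_{s,t}$; a common-refinement argument then shows the limit over $|\mathcal{P}|\to0$ exists, is independent of the chosen sequence of partitions, and is additive.

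The genuinely non-routine content is this Sewing Lemma estimate, i.e. controlling the Riemann-type sums under refinement; the algebraic identity for the defect is exactly Lemma~\ref{lemma:check_sewing} and the exponent count is pure bookkeeping. The point that needs care is that the matching $(|\mathfrak{h}|+1)\alpha + (N-|\mathfrak{h}|)\alpha = (N+1)\alpha$ is what makes the bound uniform in $\mathfrak{h}$, so one must use that the controlled-path remainder $R^{\mathfrak{h}}$ carries \emph{exactly} the H\"older exponent $(N-|\mathfrak{h}|)\alpha$ — which is built into the definition of $\mathcal{Q}_{\mathbf{X}}(\R^m)$ — and that $|\mathfrak{h}|\le N-1$ guarantees $[\mathfrak{h}]_i$ still has at most $N$ vertices, so that axiom~(3) of Definition~\ref{defn_branched_rp} indeed applies to it.
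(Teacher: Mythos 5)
Your proposal is correct and follows exactly the route the paper takes: the paper's proof is the single sentence that the theorem follows from the Sewing Lemma \cite[Lemma 4.2]{FH14} together with Lemma \ref{lemma:check_sewing}, and you have simply filled in the (correct) bookkeeping — the bound on the defect with the exponent count $(|\mathfrak{h}|+1)\alpha+(N-|\mathfrak{h}|)\alpha=(N+1)\alpha>1$ — plus an optional inline proof of the Sewing Lemma itself. Nothing to correct.
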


\begin{proof}
 This is a consequence of the sewing lemma \cite[Lemma 4.2]{FH14} and Lemma \ref{lemma:check_sewing}.
\end{proof}

 The above theorem defines a map which sends a controlled path $\mathbf{Z}$ to a path $t \mapsto \int_0^t Z(r)\, d\mathbf{X}^i(r) \in \R^m$. In fact, this map can be naturally extended to a map $\mathbf{Z} \mapsto  \int_0^{\cdot} \mathbf{Z}(r)\, d\mathbf{X}^i(r)$ where $t \mapsto \int_0^{t} \mathbf{Z}(r)\, d\mathbf{X}^i(r)$ is a controlled path above $t \mapsto \int_0^{t} Z(r)\, d\mathbf{X}^i(r)$. To do this, we have to specify $\langle \mathfrak{h}, \int_0^{t} \mathbf{Z}(r)\, d\mathbf{X}^i(r) \rangle$ for every dual element $\mathfrak{h} \in \mathcal{F}^*_{N-1} \cup \{1\}$. We set 
 \begin{align*}
  \langle 1, \int_0^{t} \mathbf{Z}(r)\, d\mathbf{X}^i(r) \rangle := \int_0^t Z(r)\, d\mathbf{X}^i(r)
 \end{align*}
 and 
 \begin{align*}
   \langle [\tau_1 \cdots \tau_n]_i, \int_0^{t} \mathbf{Z}(r)\, d\mathbf{X}^i(r) \rangle := \langle \tau_1 \cdots \tau_n, \mathbf{Z}(t) \rangle.
 \end{align*}
 In all other cases, we define
 \begin{align*}
  \langle \tau_1 \cdots \tau_n, \int_0^{t} \mathbf{Z}(r)\, d\mathbf{X}^i(r) \rangle := 0.
 \end{align*}
 More generally, if $\mathbf{Z} = (\mathbf{Z}^1,\ldots,\mathbf{Z}^d)$ and every $\mathbf{Z}^i$ is controlled by $\mathbf{X}$, we define a controlled path $t \mapsto \int_0^t \mathbf{Z}(r) \cdot d \mathbf{X}(r)$ by setting 
 \begin{align*}
  \langle 1, \int_0^t \mathbf{Z}(r) \cdot d \mathbf{X}(r) \rangle := \sum_{i = 1}^d \langle 1, \int_0^t \mathbf{Z}^i(r)\, d\mathbf{X}^i(r) \rangle,
 \end{align*}
 \begin{align*}
  \langle [\tau_1 \cdots \tau_n]_i, \int_0^t \mathbf{Z}(r) \cdot d \mathbf{X}(r) \rangle := \langle \tau_1 \cdots \tau_n, \mathbf{Z}^i(t) \rangle
 \end{align*}
 for a tree $[\tau_1 \cdots \tau_n]_i \in \mathcal{T}_{N-1}$, $i \in \{1,\ldots,d\}$ and 
\begin{align*}
  \langle \tau_1 \cdots \tau_n, \int_0^{t} \mathbf{Z}(r) \cdot d\mathbf{X}(r) \rangle := 0
 \end{align*}
 otherwise.
 
 \begin{theorem}
  The map
  \begin{align*}
   I \colon \mathcal{Q}_{\mathbf{X}}(\R^m)^d &\to \mathcal{Q}_{\mathbf{X}}(\R^m) \\
   \mathbf{Z} &\mapsto \int_0^{\cdot}  \mathbf{Z}(r) \cdot d\mathbf{X}(r)
  \end{align*}
  is well-defined and continuous.

 \end{theorem}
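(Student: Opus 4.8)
The plan is to use that $I$ is \emph{linear} and that the norm on the target space is built entirely from the controlled‑path remainders, so that proving well‑definedness and proving the continuity estimate become one and the same task: showing that $W:=I(\mathbf Z)$ satisfies the defining relation \eqref{eqn:controlled} with all remainders bounded by a constant — depending only on $\alpha$, $T$, the rough path norm, and the (finitely many) combinatorial factors involved — times $\|\mathbf Z\|_{\mathcal Q_{\mathbf X}(\R^m)^d}$. Since $\int_0^\cdot Z(r)\cdot d\mathbf X(r)$ depends linearly on $Z$ (being a limit of the linear‑in‑$\mathbf Z$ sums $\tilde Z_{u,v}$) and all higher components of $W$ are of the form $\langle\tau_1\cdots\tau_n,\mathbf Z^i(t)\rangle$, hence linear in $\mathbf Z$, once such a bound is established, boundedness of the linear map $I$, i.e. continuity, follows. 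This is the branched analogue of the classical fact that integration maps controlled paths to controlled paths. Two elementary facts will be used throughout: the normalization $\langle\mathbf X_{s,t},1\rangle=1$, and that by the recursive definition of $\Delta$ the right tensor leg of $\Delta\tau$ for a tree $\tau$ (and of $\Delta 1$) is always $1$ or a single tree, never a product of two or more trees.

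I would then split $\mathcal F^0_{N-1}$ into three groups and verify \eqref{eqn:controlled} for $W$ in each. \emph{Case $\mathfrak g=1$.} Using $\langle\mathbf X_{s,t}\star\mathfrak g,\mathfrak h\rangle=\langle\mathbf X_{s,t}\otimes\mathfrak g,\Delta\mathfrak h\rangle$ and the fact that the only coproduct term of a forest $\mathfrak h$ with trivial right leg is $\mathfrak h\otimes 1$, one finds $\langle\mathbf X_{s,t}\star 1,W(s)\rangle=\sum_{\mathfrak h\in\mathcal F^0_{N-1}}\langle\mathbf X_{s,t},\mathfrak h\rangle\langle\mathfrak h,W(s)\rangle$; since $W$ is nonzero only on $1$ and on single‑tree dual elements, this equals $\langle 1,W(s)\rangle+\sum_{i=1}^d\tilde Z^i_{s,t}-C_{s,t}$, where $C_{s,t}$ collects the single‑tree terms of size $N$ (pruned forest of size $N-1$) and obeys $|C_{s,t}|\lesssim|t-s|^{N\alpha}$. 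Hence $\tilde R^1_{s,t}=\langle 1,W(t)\rangle-\langle\mathbf X_{s,t}\star 1,W(s)\rangle=\sum_{i=1}^d\big(\int_s^t Z^i(r)\,d\mathbf X^i(r)-\tilde Z^i_{s,t}\big)+C_{s,t}$, and the preceding integration theorem bounds the $i$‑th bracket by $C|t-s|^{(N+1)\alpha}$. Since $(N+1)\alpha> N\alpha=(N-|1|)\alpha$, this gives the required decay, with constant controlled by $\|\mathbf X\|$ and the $\|R^{\mathfrak h}\|$‑seminorms of $\mathbf Z$, hence by $\|\mathbf Z\|_{\mathcal Q_{\mathbf X}(\R^m)^d}$.

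\emph{Case $\mathfrak g=[\tau_1\cdots\tau_n]_i$ a single tree.} By construction $\langle\mathfrak g,W(t)\rangle=\langle\tau_1\cdots\tau_n,\mathbf Z^i(t)\rangle$, so I apply the controlled‑path property of $\mathbf Z^i$ with $\mathfrak k:=\tau_1\cdots\tau_n$. The key step is the combinatorial identity obtained by inserting the recursion $\Delta[\tau_1\cdots\tau_n]_i=[\tau_1\cdots\tau_n]_i\otimes 1+(\mathrm{id}\otimes B^i_+)\Delta(\tau_1\cdots\tau_n)$ into the definition of $\star$ and using that $W$ pairs nontrivially only with $1$ and single trees: $\langle\mathbf X_{s,t}\star(\tau_1\cdots\tau_n),\mathbf Z^i(s)\rangle=\langle\mathbf X_{s,t}\star[\tau_1\cdots\tau_n]_i,W(s)\rangle+E_{s,t}$, where $E_{s,t}$ is exactly the sum of the terms whose pruned forest $\mathfrak k^{(1)}$ has size $N-1$, so $|E_{s,t}|\lesssim|t-s|^{(N-|\mathfrak g|)\alpha}$. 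Consequently $\tilde R^{\mathfrak g}_{s,t}=E_{s,t}+R^{i,\tau_1\cdots\tau_n}_{s,t}$, and both terms decay like $|t-s|^{(N-|\mathfrak g|)\alpha}$ (the second even faster, of order $(N-|\mathfrak g|+1)\alpha$). \emph{Case $\mathfrak g$ a product of at least two trees.} Here $\langle\mathfrak g,W(t)\rangle=0$ by the definition of the higher components, and also $\langle\mathbf X_{s,t}\star\mathfrak g,W(s)\rangle=0$: $W$ pairs nontrivially only with $1$ and single trees, and by the elementary fact above the right leg of $\Delta$ applied to $1$ or a single tree is never a product of $\ge2$ trees, so the indicator $\mathbf 1_{\{\mathfrak h^{(2)}=\mathfrak g\}}$ never fires and $\tilde R^{\mathfrak g}\equiv 0$. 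Summing over the (finitely many) $\mathfrak h\in\mathcal F^0_{N-1}$, together with $|W(0)|\lesssim|\mathbf Z(0)|$, yields $W\in\mathcal Q_{\mathbf X}(\R^m)$ and $\|W\|_{\mathcal Q_{\mathbf X}(\R^m)}\le C\,\|\mathbf Z\|_{\mathcal Q_{\mathbf X}(\R^m)^d}$ with $C=C(\alpha,T,\mathbf X,d,N)$, which by linearity of $I$ is continuity.

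The main obstacle is the single‑tree case: one must match the Connes–Kreimer coproduct recursion of $\mathcal H$ against the somewhat ad hoc definition of the higher components of $\int_0^\cdot\mathbf Z\cdot d\mathbf X$, and carefully track which truncated terms (pruned forests of size $N-1$) are discarded into the remainder and confirm that their Hölder order is still $(N-|\mathfrak g|)\alpha$. A minor additional point is bounding the quantities $\sup_t|\langle\mathfrak k,\mathbf Z^i(t)\rangle|$ appearing in $E_{s,t}$ (and in $C_{s,t}$) by $\|\mathbf Z\|_{\mathcal Q_{\mathbf X}(\R^m)^d}$ and $\|\mathbf X\|$, which follows by iterating the controlled‑path relation down to $\mathbf Z^i(0)$. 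Everything else is the already‑proven sewing estimate of the preceding integration theorem plus routine Hölder bookkeeping.
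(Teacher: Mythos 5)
Your argument is correct, but it is worth pointing out that the paper does not actually prove this theorem: the entire proof is the citation \cite[Theorem 8.5]{Gub10}. What you have written is, in effect, a self-contained reconstruction of that result, and the three-way case analysis checks out. For $\mathfrak g=1$ the graded nature of the Connes--Kreimer coproduct ($|\mathfrak h^{(1)}|+|\mathfrak h^{(2)}|=|\mathfrak h|$) and the fact that the only term of $\Delta\mathfrak h$ with trivial right leg is $\mathfrak h\otimes 1$ do give $\langle \mathbf X_{s,t}\star 1, W(s)\rangle=\langle 1,W(s)\rangle+\sum_i\tilde Z^i_{s,t}-C_{s,t}$ with $C_{s,t}$ consisting of the size-$N$ trees $[\mathfrak h]_i$, $|\mathfrak h|=N-1$, hence of order $|t-s|^{N\alpha}$, which combined with the sewing estimate of the preceding theorem yields the required $(N-|1|)\alpha=N\alpha$ decay; for a single tree $\mathfrak g=[\tau_1\cdots\tau_n]_i$ the recursion for $\Delta$ correctly reduces the remainder to $R^{i,\tau_1\cdots\tau_n}_{s,t}$ plus the discarded terms with $|\mathfrak f|=N-1$, whose pruned part has size exactly $N-|\mathfrak g|$; and for a proper forest $\mathfrak g$ the observation that the right leg of $\Delta$ applied to $1$ or to a single tree is never a product of two or more trees makes the remainder vanish identically. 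Together with linearity of $I$ (so that the remainder bounds, all linear in $\|\mathbf Z\|_{\mathcal Q_{\mathbf X}(\R^m)^d}$ up to constants depending on $\mathbf X$, $\alpha$, $T$, $d$, $N$, give operator boundedness and hence continuity), this is a complete proof strategy. What your route buys over the bare citation is that it verifies the statement for the paper's own, somewhat ad hoc, definition of the higher-order components of $\int_0^\cdot\mathbf Z(r)\cdot d\mathbf X(r)$ given just before the theorem, rather than taking on faith that this definition coincides with Gubinelli's; the only point you should make fully explicit in a written version is the bound $\sup_t|\langle\mathfrak k,\mathbf Z^i(t)\rangle|\lesssim\|\mathbf Z\|_{\mathcal Q_{\mathbf X}(\R^m)^d}$ obtained by applying \eqref{eqn:controlled} with $s=0$, which you correctly flag as routine.
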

 
 \begin{proof}
  \cite[Theorem 8.5]{Gub10}.
 \end{proof}
 
 The next lemma shows that controlled paths composed with sufficiently smooth functions are again controlled.
 
 \begin{lemma}
  Let $\phi \colon \R^m \to \R^m$ be sufficiently smooth such that all derivatives below exist. For $\mathbf{Z} \in \mathcal{Q}_{\mathbf{X}}(\R^m)$, we define $\langle 1,   \phi(\mathbf{Z}(t)) \rangle := \phi(Z(t))$ and
  \begin{align*}
   \langle \mathfrak{h}, \phi(\mathbf{Z}(t)) \rangle := \sum_{n = 1}^{N-1} \sum_{\mathfrak{h}_1 \cdots \mathfrak{h}_n = \mathfrak{h}} \frac{1}{n!} \phi^{(n)}(Z(t)) \left( \langle \mathfrak{h}_1, \mathbf{Z}(t) \rangle, \ldots, \langle \mathfrak{h}_n, \mathbf{Z}(t) \rangle \right)
  \end{align*}
  for $\mathfrak{h} \in \mathcal{F}_{N-1}^*$. Then, $\phi(\mathbf{Z}) \in \mathcal{Q}_{\mathbf{X}}(\R^m)$. 

 \end{lemma}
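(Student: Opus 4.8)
The plan is to verify that $\phi(\mathbf{Z})$, with components defined by the Fa\`a di Bruno-type formula given in the statement, satisfies the defining relation \eqref{eqn:controlled} of a controlled path, i.e. for each $\mathfrak{h} \in \mathcal{F}^0_{N-1}$ we must show
\begin{align*}
 \langle \mathfrak{h}, \phi(\mathbf{Z}(t)) \rangle = \langle \mathbf{X}_{s,t} \star \mathfrak{h}, \phi(\mathbf{Z}(s)) \rangle + \tilde{R}^{\mathfrak{h}}_{s,t}
\end{align*}
with a remainder obeying $|\tilde{R}^{\mathfrak{h}}_{s,t}| \lesssim |t-s|^{(N - |\mathfrak{h}|)\alpha}$. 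The key algebraic input is that the pairing $\langle \mathbf{X}_{s,t} \star \cdot, \cdot \rangle$ interacts well with the product structure: expanding $\langle \mathbf{X}_{s,t} \star \mathfrak{h}, \phi(\mathbf{Z}(s)) \rangle$ by inserting the definition of $\phi(\mathbf{Z}(s))$ produces, for each way of writing $\mathfrak{h}$ as a product $\mathfrak{h}_1 \cdots \mathfrak{h}_n$ of forests, a term $\frac{1}{n!}\phi^{(n)}(Z(s))$ applied to the vector $(\langle \mathbf{X}_{s,t}\star\mathfrak{h}_1, \mathbf{Z}(s)\rangle, \ldots)$ — up to the combinatorics of how $\Delta$ distributes $\mathfrak{h}$ over the forest, which is precisely the coproduct being multiplicative on forests. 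Using the multiplicativity property (1) of $\mathbf{X}$ together with the controlled-path relation for $\mathbf{Z}$ itself, each $\langle \mathbf{X}_{s,t} \star \mathfrak{h}_j, \mathbf{Z}(s) \rangle$ equals $\langle \mathfrak{h}_j, \mathbf{Z}(t) \rangle - R^{\mathfrak{h}_j}_{s,t}$.

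Concretely, I would first record the identity $\langle \mathbf{X}_{s,t}\star(\mathfrak{g}_1\mathfrak{g}_2), \mathbf{Z}(s)\rangle$ type manipulations and, more importantly, the fact that summing $\frac{1}{n!}$ over ordered factorizations $\mathfrak{h}_1\cdots\mathfrak{h}_n = \mathfrak{h}$ together with the coproduct combinatorics reproduces exactly the multinomial coefficients needed. Then I would substitute $\langle \mathfrak{h}_j, \mathbf{Z}(s)\rangle \rightsquigarrow \langle\mathbf{X}_{s,t}\star\mathfrak{h}_j,\mathbf{Z}(s)\rangle = \langle\mathfrak{h}_j,\mathbf{Z}(t)\rangle - R^{\mathfrak{h}_j}_{s,t}$ and perform a Taylor expansion of $\phi^{(n)}$: the main term, where we keep $\langle\mathfrak{h}_j,\mathbf{Z}(t)\rangle$ everywhere and Taylor-expand $\phi^{(n)}(Z(s))$ around $Z(t)$, reassembles by the standard Fa\`a di Bruno identity into $\langle\mathfrak{h},\phi(\mathbf{Z}(t))\rangle$ plus higher-order terms; all the discarded pieces — those involving at least one factor $R^{\mathfrak{h}_j}_{s,t}$, the Taylor remainder of $\phi^{(n)}$, and the contributions of forests of size $> N-1$ that get truncated — I would bundle into $\tilde{R}^{\mathfrak{h}}_{s,t}$ and estimate degree by degree. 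The degree bookkeeping is what makes the exponent $(N-|\mathfrak{h}|)\alpha$ come out: a factor $R^{\mathfrak{g}}_{s,t}$ contributes $(N-|\mathfrak{g}|)\alpha$, a "missing" forest of size $k$ that would have been needed to reach total size $|\mathfrak{h}|$ contributes $\ge k\alpha$ from the increment $X$ or the smoothness of $\phi$, and these always combine to at least $(N-|\mathfrak{h}|)\alpha$ since $Z$ is $\alpha$-H\"older and $\phi$ is Lipschitz.

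I would structure the write-up as: (i) a lemma-internal claim giving the precise algebraic identity
$\langle\mathbf{X}_{s,t}\star\mathfrak{h},\phi(\mathbf{Z}(s))\rangle = \sum_{n}\sum_{\mathfrak{h}_1\cdots\mathfrak{h}_n=\mathfrak{h}}\frac{1}{n!}\phi^{(n)}(Z(s))\big(\langle\mathfrak{h}_1,\mathbf{Z}(t)\rangle - R^{\mathfrak{h}_1}_{s,t},\ldots\big)$, which follows from multiplicativity of $\mathbf{X}$ and of $\Delta$ on $\mathcal{H}$; (ii) multilinear expansion of each $\phi^{(n)}$ term separating the "all $\mathbf{Z}(t)$" contribution from remainders; (iii) a Taylor expansion of $\phi^{(n)}(Z(s))$ about $Z(t)$ using $|Z_{s,t}| \lesssim |t-s|^\alpha$ (from $\mathbf{Z}$ controlled, since $R^1 = 0$ forces $Z_{s,t} = \langle\bullet\text{-part}\rangle$... more precisely from the $\mathfrak{h}=1$ case of \eqref{eqn:controlled}); (iv) recognizing that the surviving main term is by definition $\langle\mathfrak{h},\phi(\mathbf{Z}(t))\rangle$ once one matches it against the defining formula; (v) collecting all error contributions and checking each has H\"older exponent $\ge(N-|\mathfrak{h}|)\alpha$ using the bounds $|R^{\mathfrak{g}}_{s,t}|\lesssim|t-s|^{(N-|\mathfrak{g}|)\alpha}$, $|\langle\mathbf{X}_{s,t},\tau\rangle|\lesssim|t-s|^{\alpha|\tau|}$ and boundedness of the derivatives of $\phi$ (invoked on a bounded set containing the image of $Z$). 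The main obstacle I anticipate is step (i)–(iv): getting the combinatorial identity exactly right — in particular confirming that the multinomial factors from the coproduct $\Delta$ acting on a forest and the $1/n!$ normalization in the definition of $\langle\mathfrak{h},\phi(\mathbf{Z})\rangle$ conspire to give a clean Fa\`a di Bruno reassembly, with no leftover combinatorial discrepancy. The pure estimation in step (v) is routine once the algebra is pinned down.
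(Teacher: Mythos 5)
The paper does not prove this lemma at all: it simply cites \cite[Lemma 8.4]{Gub10}, so there is no in-paper argument to compare against. Your outline is essentially the proof strategy of that cited result --- verify the controlled-path relation \eqref{eqn:controlled} for $\phi(\mathbf{Z})$ by combining the multiplicativity of $\langle\mathbf{X}_{s,t},\cdot\rangle$ and of $\Delta$ on forests with the controlled relation for $\mathbf{Z}$, a Taylor expansion of $\phi$, and a Fa\`a di Bruno reassembly --- and the degree bookkeeping you describe for the remainder is the right one, including the truncation of forests of size $\geq N$. The plan is sound, but be aware that what you correctly flag as the ``main obstacle'' (checking that the $1/n!$ normalization, the multinomial factors from $\Delta$ acting on a forest, and the Taylor coefficients of $\phi^{(n)}$ cancel exactly) is the entire substance of the proof; as written your argument defers rather than performs it, so it is an accurate roadmap to Gubinelli's proof rather than a complete proof.
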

 
 \begin{proof}
  \cite[Lemma 8.4]{Gub10}.
 \end{proof}
 
 We are now able to say what a solution to \eqref{eqn:RDE_without_drift} actually means.
 
 \begin{definition}
  A path $y \colon [0,T] \to \R^m$ is a \emph{solution to \eqref{eqn:RDE_without_drift}} if $y(t_0) = y_0$ and if there exists a controlled path $\mathbf{Y} \in \mathcal{Q}_{\mathbf{X}}(\R^m)$ above $y$ such that
  \begin{align}\label{eqn:branched_RDE}
   \mathbf{Y}(t) - \mathbf{Y}(s) = \int_s^t f(\mathbf{Y}(r)) \cdot d \mathbf{X}(r)
  \end{align}
  holds for every $s \leq t$, $s,t \in [t_0,T]$, where we set $f(\mathbf{Y}(r)) := (f_1(\mathbf{Y}(r)), \ldots f_d(\mathbf{Y}(r)))$.

 \end{definition}
 
 Proving that \eqref{eqn:branched_RDE} admits a (unique) solution is done by a standard fixed-point argument \cite[Theorem 8.8]{Gub10}. If $f$ is of class $\operatorname{Lip}^{\gamma-1}$ for some $\gamma > \frac{1}{\alpha}$, a local solution to \eqref{eqn:branched_RDE} exists. If $\operatorname{Lip}^{\gamma-1}_b$, the solution exists on every time interval. For $f$ being of class $\operatorname{Lip}^{\gamma}$ resp. $\operatorname{Lip}^{\gamma}_b$, the local resp. global solution is unique. Moreover, in the second case, the solution map is continuous.
 
 Recall the definition of the elementary differential $F(\tau)$ for $\tau \in \mathcal{T}^0$ given in Definition \ref{defn:elem_diff}. 
 
%  We now extend this definition and set 
%  \begin{align*}
%   F(h) := \langle h, 1 \rangle \operatorname{Id} + \sum_{\tau \in \mathcal{T}} \langle h, \tau \rangle F(\tau)
%  \end{align*}
%  for $h \in \mathcal{H}^*$. Note that this definition implies that $F(\tau_1 \cdots \tau_n) = 0$ for $\tau_1 \cdots \tau_n \in \mathcal{F}^* \setminus \mathcal{T}^*$.

\begin{lemma}\label{lemma:coeff_sol}
 Let $\mathbf{Y} \colon [0,T] \to \mathcal{H}_{N-1}$ with $y(t) = \langle 1, \mathbf{Y}(t) \rangle$ be a solution to \eqref{eqn:branched_RDE}. Then, the coefficients of $\mathbf{Y}$ are given by 
  \begin{align*}
   \langle \tau, \mathbf{Y}(t) \rangle = \frac{1}{\sigma(\tau)} F(\tau)(y(t))
  \end{align*}
  for $\tau \in \mathcal{T}^*_{N-1} \cup \{1\}$ and $\langle \tau_1 \cdots \tau_n, \mathbf{Y}(t) \rangle = 0$ for $\tau_1 \cdots \tau_n \in \mathcal{F}^*_{N-1} \setminus \mathcal{T}^*_{N-1}$.
\end{lemma}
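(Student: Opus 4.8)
\emph{Approach.} I would compute all coefficients $\langle\mathfrak{h},\mathbf{Y}(t)\rangle$, $\mathfrak{h}\in\mathcal{F}^0_{N-1}$, by strong induction on the number of vertices $|\mathfrak{h}|$, playing two descriptions of $\mathbf{Y}$ against each other: on the one hand the defining identity \eqref{eqn:branched_RDE} together with the explicit formula for the coefficients of the controlled-path integral $\int_0^{\cdot}f(\mathbf{Y})\cdot d\mathbf{X}$ (equivalently, of the increment $\int_s^t f(\mathbf{Y})\cdot d\mathbf{X}$), and on the other hand the composition lemma, which expresses the coefficients of $f_i(\mathbf{Y})$ through those of $\mathbf{Y}$. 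The base case $\mathfrak{h}=1$ is immediate since $\langle 1,\mathbf{Y}(t)\rangle=y(t)=F(1)(y(t))$ and $\sigma(1)=1$.

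\emph{Inductive step.} Fix $k\ge 1$, assume the formula for all forests on fewer than $k$ vertices, and let $|\mathfrak{h}|=k$. If $\mathfrak{h}=\tau_1\cdots\tau_n$ is a genuine forest ($n\ge 2$), then by definition of the integral against $\mathbf{X}$ the $\mathfrak{h}$-component of $\int_s^t f(\mathbf{Y})\cdot d\mathbf{X}$ vanishes, so \eqref{eqn:branched_RDE} forces $t\mapsto\langle\mathfrak{h},\mathbf{Y}(t)\rangle$ to be constant; that this constant is $0$ is dealt with at the end. If $\mathfrak{h}=\tau=[\tau_1\cdots\tau_n]_i$ is a tree, then by the same definition the $\tau$-component of $\mathbf{Y}(t)-\mathbf{Y}(s)$ equals $\langle\tau_1\cdots\tau_n,f_i(\mathbf{Y}(t))\rangle-\langle\tau_1\cdots\tau_n,f_i(\mathbf{Y}(s))\rangle$, so $t\mapsto\langle\tau,\mathbf{Y}(t)\rangle-\langle\tau_1\cdots\tau_n,f_i(\mathbf{Y}(t))\rangle$ is constant, and it remains to evaluate $\langle\tau_1\cdots\tau_n,f_i(\mathbf{Y}(t))\rangle$ and to identify the constant.

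\emph{The combinatorial core and the integration constant.} Since $|\tau_1\cdots\tau_n|=k-1<k$, every forest occurring in the composition formula for $\langle\tau_1\cdots\tau_n,f_i(\mathbf{Y}(t))\rangle$ has fewer than $k$ vertices, so the induction hypothesis applies; in particular all coefficients indexed by genuine (non-tree) forests vanish. Hence only the summands survive in which $\tau_1\cdots\tau_n$ is split into $n$ single trees — the orderings of the multiset $\{\tau_1,\dots,\tau_n\}$ — and, writing $\tau=[(\rho_1)^{r_1}\cdots(\rho_q)^{r_q}]_i$ with the $\rho_j$ pairwise distinct, there are $n!/(r_1!\cdots r_q!)$ of them, each contributing the same value by symmetry of $f_i^{(n)}$. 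Using $\langle\tau_l,\mathbf{Y}(t)\rangle=F(\tau_l)(y(t))/\sigma(\tau_l)$ and multilinearity,
\begin{align*}
 \langle\tau_1\cdots\tau_n,f_i(\mathbf{Y}(t))\rangle
 &= \frac{1}{n!}\cdot\frac{n!}{r_1!\cdots r_q!}\; f_i^{(n)}\bigl(y(t)\bigr)\Bigl(\tfrac{F(\tau_1)(y(t))}{\sigma(\tau_1)},\dots,\tfrac{F(\tau_n)(y(t))}{\sigma(\tau_n)}\Bigr) \\
 &= \frac{1}{\sigma(\tau)}\,F(\tau)(y(t)),
\end{align*}
the last step being exactly the recursive definitions $F(\tau)=f_i^{(n)}\bigl(F(\tau_1),\dots,F(\tau_n)\bigr)$ (Definition \ref{defn:elem_diff}) and $\sigma(\tau)=r_1!\cdots r_q!\prod_{l}\sigma(\tau_l)$. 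Evaluating the resulting identity $\langle\tau,\mathbf{Y}(t)\rangle=\tfrac{1}{\sigma(\tau)}F(\tau)(y(t))+\mathrm{const}$ at $t=t_0$ (with $y(t_0)=y_0$) shows the constant equals $\langle\tau,\mathbf{Y}(t_0)\rangle-\tfrac{1}{\sigma(\tau)}F(\tau)(y_0)$, so everything reduces to the fact that the higher-order (Gubinelli-type) coefficients of a solution's controlled path at the initial time are the canonical elementary differentials; this I would extract from the controlled-path regularity \eqref{eqn:controlled} of $\mathbf{Y}$ by comparing, for $s\downarrow t_0$, the Taylor expansion of $y$ produced by \eqref{eqn:controlled} with the one produced by \eqref{eqn:branched_RDE} and the sewing estimate (equivalently, it is built into the fixed-point construction of the solution). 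The same reasoning disposes of the constant in the non-tree case, and the induction closes.

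\emph{Expected main difficulty.} The crux is the combinatorial identity in the third step: one must verify that the factor $1/n!$ from the composition lemma, the multinomial number of orderings of $\{\tau_1,\dots,\tau_n\}$, the multiplicities $r_j$, and the product $\prod_l\sigma(\tau_l)$ combine precisely to give $1/\sigma(\tau)$ — this is the combinatorial heart of $B$-series theory, and keeping the decorations and multiplicities straight is the delicate part. A secondary, more analytic point is the identification of the integration constant, which is the only place where the regularity of $\mathbf{Y}$ as a controlled path, rather than pure algebra, enters.
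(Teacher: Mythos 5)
Your proposal is correct and follows essentially the same route as the paper: induction over trees, reduction of $\langle \tau,\mathbf{Y}(t)\rangle$ to $\langle \tau_1\cdots\tau_n, f_i(\mathbf{Y}(t))\rangle$ via the definition of the rough integral, then the composition lemma together with the multinomial count of orderings of $\{\tau_1,\dots,\tau_n\}$ to produce the factor $1/\sigma(\tau)$, exactly as in the paper's chain of equalities. The only divergence is your concern about the integration constant: the paper sidesteps it by reading the solution property directly as the coefficient identity $\langle [\tau_1\cdots\tau_n]_i,\mathbf{Y}(t)\rangle = \langle \tau_1\cdots\tau_n, f_i(\mathbf{Y}(t))\rangle$ (and $=0$ for genuine forests), i.e.\ precisely your parenthetical remark that this is built into the fixed-point formulation $\mathbf{Y}=y_0+\int_0^{\cdot}f(\mathbf{Y})\cdot d\mathbf{X}$, so no separate analytic identification of the constant is needed.
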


\begin{proof}
 Being a solution to \eqref{eqn:branched_RDE} means that
 \begin{align*}
  y(t) - y(s) = \langle 1, \int_s^t f(\mathbf{Y}(r)) \cdot d\mathbf{X}(r) \rangle 
 \end{align*}
 and
 \begin{align}\label{eqn:rec_sol}
  \langle [\tau_1 \cdots \tau_n]_i , \mathbf{Y}(t) \rangle = \langle [\tau_1 \cdots \tau_n]_i , \int_0^t f(\mathbf{Y}(r)) \cdot d\mathbf{X}(r) \rangle 
 \end{align}
 for all $[\tau_1 \cdots \tau_n]_i \in \mathcal{T}^*_{N-1} $, $i \in \{1,\ldots,d\}$, and
 \begin{align*}
  \langle \tau_1 \cdots \tau_n , \mathbf{Y}(t) \rangle = 0 
 \end{align*}
 for all $\tau_1 \cdots \tau_n \in \mathcal{F}^*_{N-1} \setminus \mathcal{T}^*_{N-1}$. We prove the assertion for all trees $\tau \in \mathcal{T}^*_{N-1}$ by induction on the height of $\tau$. For $\tau = 1$, the claim follows by definition. Now let $\tau = [\tau_1 \cdots \tau_n]_i = [(\tau_1)^{r_1} \cdots (\tau_q)^{r_q}]_i \in \mathcal{T}^*_{N-1}$ for some $i \in \{1,\ldots,d\}$ and pairwise distinct trees $\tau_1,\ldots,\tau_q$. From \eqref{eqn:rec_sol}, we have
  \begin{align*}
   \langle \tau , \mathbf{Y}(t) \rangle &= \langle [\tau_1 \cdots \tau_n]_i , \int_0^t f(\mathbf{Y}(r)) \cdot d\mathbf{X}(r) \rangle \\
   &= \langle \tau_1 \cdots \tau_n, f_i(\mathbf{Y}(t)) \rangle \\
   &=  \sum_{\substack{ \lambda_1, \cdots, \lambda_n \in \mathcal{T}^*_{N-2} \\ \lambda_1 \cdots \lambda_n = \tau_1 \cdots \tau_n}} \frac{1}{n!} f_i^{(n)}(y(t)) \left( \langle \lambda_1, \mathbf{Y}(t) \rangle, \ldots,  \langle \lambda_n, \mathbf{Y}(t) \rangle \right) \\
   &= \frac{1}{r_1! \cdots r_q!} \frac{1}{n!} \sum_{\sigma \in \operatorname{sym}(n)} f_i^{(n)}(y(t)) \left( \langle \tau_{\sigma(1)}, \mathbf{Y}(t) \rangle, \ldots,  \langle \tau_{\sigma(n)}, \mathbf{Y}(t) \rangle \right) \\
   &= \frac{1}{r_1! \cdots r_q!} f_i^{(n)}(y(t)) \left( \langle \tau_1, \mathbf{Y}(t) \rangle, \ldots,  \langle \tau_n, \mathbf{Y}(t) \rangle \right) \\
   &= \frac{1}{r_1! \cdots r_q!} \frac{1}{\sigma(\tau_1) \cdots \sigma(\tau_n)} f_i^{(n)}(y(t)) \left( F(\tau_1)(y(t)), \ldots,  F(\tau_n)(y(t))\right) \\
   &= \frac{1}{\sigma(\tau)} F(\tau)(y(t))
  \end{align*}
  by induction hypothesis.

\end{proof}

 \begin{theorem}\label{B-series_RDE}
 Let $h>0$. %be the step size of scheme \eqref{eqn:RK_equations}. 
 Then, \eqref{eqn:RDE_without_drift} has the expansion
  \begin{align*}
   y(t_0 + h) % = y_0 + \sum_{\tau \in \mathcal{T}} \frac{1}{\sigma(\tau)} f_{\tau}(y_0) \langle \mathbf{X}_{t_0, t_0+h},\tau \rangle
   =y_0 + \sum_{\tau \in \mathcal{T}_p} \frac{1}{\sigma(\tau)} F({\tau})(y_0) \langle \mathbf{X}_{t_0, t_0 + h},\tau \rangle + \mathcal{O}(h^{(p+1)\alpha})
  \end{align*}
  for every $p \geq \lfloor 1/\alpha \rfloor$.

 \end{theorem}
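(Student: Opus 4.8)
The plan is to combine the two structural facts already established: Lemma~\ref{lemma:coeff_sol}, which identifies the coefficients of the controlled solution path $\mathbf{Y}$ with the elementary differentials $\frac{1}{\sigma(\tau)}F(\tau)(y)$, and the defining relation \eqref{eqn:branched_RDE} together with the quantitative error bound from Gubinelli's integration theorem. Concretely, I would start from the identity
\begin{align*}
 y(t_0+h) - y_0 = \langle 1, \mathbf{Y}(t_0+h) - \mathbf{Y}(t_0) \rangle = \left\langle 1, \int_{t_0}^{t_0+h} f(\mathbf{Y}(r)) \cdot d\mathbf{X}(r) \right\rangle,
\end{align*}
and then apply the error estimate of the integration theorem to the controlled path $\mathbf{Z} := f(\mathbf{Y})$. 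That estimate says that $\int_{t_0}^{t_0+h} f(\mathbf{Y}(r)) \cdot d\mathbf{X}(r)$ differs from the ``local germ'' $\tilde{Z}_{t_0,t_0+h} = \sum_{\mathfrak{h} \in \mathcal{F}_{N-1}^0} \langle \mathfrak{h}, f(\mathbf{Y}(t_0)) \rangle \langle \mathbf{X}_{t_0,t_0+h}, [\mathfrak{h}]_i \rangle$ (summed over $i$) by a term of order $h^{(N+1)\alpha}$, which is already $\mathcal{O}(h^{(p+1)\alpha})$ when $p = N = \lfloor 1/\alpha \rfloor$.

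Next I would rewrite the germ $\tilde{Z}_{t_0,t_0+h}$ in tree language. Using the definition of $f(\mathbf{Y})$ as a controlled path (the composition lemma) and Lemma~\ref{lemma:coeff_sol} to substitute $\langle \lambda, \mathbf{Y}(t_0) \rangle = \frac{1}{\sigma(\lambda)} F(\lambda)(y_0)$ for trees $\lambda$ and $0$ for genuine forests, each coefficient $\langle \mathfrak{h}, f_i(\mathbf{Y}(t_0)) \rangle$ becomes a sum over ways of writing $\mathfrak{h}$ as a forest $\tau_1 \cdots \tau_n$, weighted by $\frac{1}{n!}$ and the $\frac{1}{\sigma(\tau_j)}$ factors, which is exactly the computation already carried out in the proof of Lemma~\ref{lemma:coeff_sol}. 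Hence $\langle \mathfrak{h}, f_i(\mathbf{Y}(t_0)) \rangle = \frac{1}{\sigma([\mathfrak{h}]_i)} F([\mathfrak{h}]_i)(y_0)$ whenever $[\mathfrak{h}]_i$ is a tree, and summing over $i \in \{1,\dots,d\}$ and over $\mathfrak{h} \in \mathcal{F}_{N-1}^0$ produces precisely a sum over all trees $\tau = [\mathfrak{h}]_i$ with $|\tau| \leq N$, i.e. over $\tau \in \mathcal{T}_N = \mathcal{T}_p$. This yields
\begin{align*}
 \tilde{Z}_{t_0,t_0+h} = \sum_{\tau \in \mathcal{T}_p} \frac{1}{\sigma(\tau)} F(\tau)(y_0) \langle \mathbf{X}_{t_0,t_0+h}, \tau \rangle,
\end{align*}
which is the claimed main term, and the theorem follows for $p = \lfloor 1/\alpha \rfloor$.

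To get the statement for arbitrary $p \geq \lfloor 1/\alpha \rfloor$, I would note that increasing $N$ in the controlled-path framework only adds trees $\tau$ with $\lfloor 1/\alpha \rfloor < |\tau| \leq p$, each contributing a term of size $\mathcal{O}(h^{|\tau|\alpha}) = \mathcal{O}(h^{(\lfloor 1/\alpha\rfloor + 1)\alpha})$ by the H\"older bound in Definition~\ref{defn_branched_rp}(3); so truncating the sum at level $\lfloor 1/\alpha \rfloor$ versus level $p$ only changes things by $\mathcal{O}(h^{(p+1)\alpha})$ after one checks that all the intermediate-order trees are absorbed into the remainder — alternatively one simply reruns the above argument with $N$ replaced by $p$, which is cleaner. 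The main technical obstacle is the bookkeeping in the second step: one must verify carefully that the combinatorial identity from Lemma~\ref{lemma:coeff_sol} still holds when the outer vertex with label $i$ is grafted on (i.e. that the symmetry factor $\sigma([\mathfrak{h}]_i)$ emerges correctly from the $\frac{1}{n!}$ and repetition counts), and that the passage from summing over forests $\mathfrak{h}$ and labels $i$ to summing over trees $\tau$ is a genuine bijection with no double counting; the analytic part (applying the sewing estimate and controlling the remainder uniformly in $h$ small) is routine given the theorems already quoted.
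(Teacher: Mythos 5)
Your proposal follows essentially the same route as the paper: write the increment as the rough integral of the controlled path $f(\mathbf{Y})$, replace it by the germ $\tilde{Z}_{t_0,t_0+h}$ at the cost of a sewing-lemma remainder of order $(p+1)\alpha$, and identify the germ coefficients $\langle \mathfrak{h}, f_i(\mathbf{Y}(t_0))\rangle = \frac{1}{\sigma([\mathfrak{h}]_i)}F([\mathfrak{h}]_i)(y_0)$ via Lemma~\ref{lemma:coeff_sol}. The paper handles general $p$ exactly as in your ``cleaner'' alternative, i.e.\ by running the argument with the germ summed over $\mathcal{F}_{p-1}^0$ rather than $\mathcal{F}_{N-1}^0$, so the two arguments coincide.
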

 
 \begin{proof}
  Let $s < t$. Note that
  \begin{align*}
   y(t) - y(s) = \sum_{i = 1}^d \int_s^t f_i(y(r))\, d\mathbf{X}^i(r).
  \end{align*}
  For $i \in \{1,\ldots, d\}$, set 
  \begin{align*}
   \tilde{Z}_{s,t}^i := \sum_{\mathfrak{h} \in \mathcal{F}_{p-1}^0} \langle \mathfrak{h}, f_i(\mathbf{Y}(s)) \rangle \langle \mathbf{X}_{s,t},[\mathfrak{h}]_i \rangle  = \sum_{\mathfrak{h} \in \mathcal{F}_{p-1}^0} \frac{1}{\sigma([\mathfrak{h}]_i)} F([\mathfrak{h}]_i)(y(s)) \langle \mathbf{X}_{s,t},[\mathfrak{h}]_i \rangle
  \end{align*}
  where we use Lemma \ref{lemma:coeff_sol} for the equality. We therefore obtain
  \begin{align*}
   y(t) - y(s) =  \sum_{i = 1}^d \tilde{Z}_{s,t}^i + R_{s,t} = \sum_{\tau \in \mathcal{T}_{p}} \frac{1}{\sigma(\tau)} F(\tau)(y(s)) \langle \mathbf{X}_{s,t},\tau \rangle + R_{s,t}
  \end{align*}
  where 
  \begin{align*}
   R_{s,t} = \sum_{i = 1}^d \int_s^t f_i(y(r))\, d\mathbf{X}^i(r) - \tilde{Z}_{s,t}^i.
  \end{align*}
  Using Lemma \ref{lemma:check_sewing} and the sewing Lemma \cite[Lemma 4.2]{FH14}, we conclude that $R_{s,t}$ is of order $\mathcal{O}((t-s)^{(p+1)\alpha})$.

 \end{proof}

We introduce the local error by $le(t_0, y_0; h) : = y(t_0 + h) - y_1$ which is the error of one step with the iterative scheme \eqref{eqn:RK_equations} starting in the exact value $y_0$.
Comparing Theorems \ref{B-series_RK} and \ref{B-series_RDE} and exploiting Lemma \ref{lemma_symmetry_factor}, we see that 
% \begin{align*}
%   le(t_0, x_0; h) = \sum_{\tau \in \mathcal{T}} \frac{1}{\sigma(\tau)} F({\tau})(y_0) \left(\langle \mathbf{X}_{t_0, t_0+h},\tau \rangle  - a(\tau)(h)\right).                                                                                                                  
%                                                                                                                                  \end{align*}
% Hence, the order of 
the local error is \begin{align}\label{rate_local}
        |le(t_0, y_0; h)| = \mathcal{O}(h^{(p+1)\alpha})                                   
                                          \end{align}
for sufficiently small $h>0$ if and only if 
\begin{align}\label{cond_loc_rate}
 \langle \mathbf{X}_{t_0, t_0+h},\tau \rangle  = a(\tau)(h)\quad \forall \tau \in \mathcal{T} \text{ with } |\tau|\leq p.
\end{align}
% using (3) in Definition \ref{defn_branched_rp}.

\section{Simplified Runge Kutta Methods}\label{sec:simple_RK}

In the following, $\mathbf X$ denotes an $\alpha$-branched rough path for some $\alpha \in (0,1]$. Assume that there is a smooth path $X^h$ such that its natural lift $\mathbf X^h$ to a branched rough path, cf. Example \ref{example_smooth}, approximates $\mathbf X$, i.e., $\varrho_\alpha(\mathbf X^h, \mathbf X)\rightarrow 0$ for $h\rightarrow 0$. This implies that $\mathbf{X}$ is a \emph{geometric} rough path \cite[Section 4]{HK15} and that $\varrho^g_\alpha(\mathbf X^h, \mathbf X)\rightarrow 0$ where $\varrho^g_{\alpha}$ denotes the inhomogeneous rough paths metric for geometric rough paths \cite{FV10}. We introduce the equation associated to the smooth driver by 
\begin{align}\label{eqn:ODE}
 dy^h(t) = f(y^h(t))\, d\mathbf{X}^h(t),\quad y^h(t_0) = y_0.
\end{align}
This equation can be solved by considering $\mathbf{X}$ as a branched rough path or a geometric rough path, the solution is the same in both cases. It also coincides with the solution to the corresponding Riemann-Stieltjes equation which is well-defined since $X^h$ is smooth by assumption. Since the solution to \eqref{eqn:RDE_without_drift} is a locally Lipschitz continuous function of $\mathbf X$, cf. \cite{FV10} in the case of geometric rough paths or \cite[Theorem 8.8]{Gub10} for branched rough paths, i.e., 
\begin{align}\label{eqn:local_hoelder_rp}
 \sup_{t\in [t_0, T]}|y(t)-y^h(t)| \lesssim  \varrho^g_\alpha(\mathbf X^h, \mathbf X),
\end{align}
we find that $y^h$ is close to $y$ for sufficiently small $h$. In this section, we restrict ourselves to branched rough paths $\mathbf X$ that are the limit of a lifted piece-wise linear approximation $X^h$ of $X$. This, e.g., includes semi-martingales, fractional Brownian motions with Hurst index $H>\frac{1}{4}$ and other Gaussian processes \cite{FV10}. This piece-wise linear approximation to $X$ on some grid $t_0< t_1 <\ldots <t_N =T$ is constructed as follows:
\begin{align}\label{piece-wise_linear}
 X^h(t) = X(t_k) + \frac{t-t_k}{h_k}\left[X(t_{k+1}) - X(t_k) \right],\quad t\in(t_k, t_{k+1}],
\end{align}
where $h_k=t_{k+1}-t_k$ and $k= 0, 1, \ldots, N - 1$. We assume that this piece-wise linear %Wong-Zakai 
approximation converges with rate $r_0>0$, meaning that 
\begin{align*}
 \varrho^g_\alpha(\mathbf X^h, \mathbf X) = \mathcal O(h^{r_0})
\end{align*}
for sufficiently small $h$, where $h=\max_{k=0,\ldots, N-1}\vert t_{k+1}-t_k\vert$.

\begin{example}
 In \cite{FR14}, the almost sure convergence rate of $\mathbf X^h$ to $\mathbf X$ is calculated for the natural lift $\mathbf{X}$ (in the sense of \cite{FV10-2}) of a large class of Gaussian processes $X$ in the metric $\varrho_{\alpha}^g$. In particular, for the lift of a fractional Brownian motion with Hurst parameter $H \in (1/4,1,2]$, one can show that the rate $r_0$ is arbitrarily close to $2H - 1/2$ provided one chooses $\alpha$ sufficiently small.
\end{example}

% \textcolor{red}{In which cases do we know $r_0$? Only centered Gaussian case?}\smallskip

Below, we analyze the order of the local error of some simplified Runge-Kutta scheme if the underlying driver is $\mathbf{X}^h$, considered as $\alpha$-branched rough path. This scheme is obtained by setting \begin{align}\label{ansatz_simple_scheme}
Z^{(k)}_{ij} = a_{ij} X^k_{t_n, t_{n+1}}\quad \text{and} \quad z^{(k)}_i =b_i X^k_{t_n, t_{n+1}}   \end{align}
in (\ref{eqn:RK_equations}), where $X^k_{t_n, t_{n+1}}$ denotes the increment of the $k$th component of $X$ on $[t_n, t_{n+1}]$.
Method \eqref{eqn:RK_equations} then becomes 
\begin{align}\label{eqn:RK_equations_simple}
 \begin{split}
      Y_i^h &= y_n^h + \sum_{k = 1}^d \sum_{j=1}^s a_{ij} f_k(Y^h_j)  X^k_{t_n, t_{n+1}} = y_n^h + \sum_{j=1}^s a_{ij} f(Y^h_j)  X_{t_n, t_{n+1}} \\
      y_{n+1}^h &= y_n^h + \sum_{k = 1}^d \sum_{i=1}^s b_i f_k(Y^h_i)  X^k_{t_n, t_{n+1}} = y_n^h + \sum_{i=1}^s b_i f(Y^h_i)  X_{t_n, t_{n+1}},
 \end{split}
\end{align}
where $\mathcal A = (a_{ij})$ is a deterministic matrix and $b=(b_i)$ a deterministic vector. This Runge-Kutta method based on the increments of $X$ was considered in \cite{HHW18} in the context 
of implicit schemes for equations driven by a certain class of Gaussian processes. We aim to find general conditions on the coefficients $b$ and $\mathcal A$ that guarantee the desired order of the
local error when approximating (\ref{eqn:ODE}). We begin with a result characterizing the branched rough path if the underlying path is given by (\ref{piece-wise_linear}). 
 
\begin{proposition}\label{branched_rp_WZ}
Let $\tau\in \mathcal T$ be a tree of order $p$, i.e, $|\tau| = p$. Then, for the branched rough path associated to the piece-wise linear approximation in (\ref{piece-wise_linear}), we have \begin{align*}
   \langle \mathbf{X}^h_{t_k, t_{k+1}}, \tau \rangle = \frac{1}{\gamma(\tau)} X^{i_1}_{t_k, t_{k+1}}   X^{i_2}_{t_k, t_{k+1}}  \dots X^{i_p}_{t_k, t_{k+1}}        
                                                                                  \end{align*}
where the $i_\ell\in \{1, 2, \ldots, d\}$ are the labels of the tree $\tau$ and $X^i_{t_k, t_{k+1}}$ is the increment of the $i$th component of $X$ on $[t_k, t_{k+1}]$.
                                                                                  \end{proposition}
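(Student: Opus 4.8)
The plan is to prove the formula by induction on the number of vertices $p = |\tau|$, using the recursive definition of the natural lift given in Example \ref{example_smooth} together with the fact that on the interval $[t_k,t_{k+1}]$ the path $X^h$ is the straight line segment from $X(t_k)$ to $X(t_{k+1})$. First I would record the base case: for $\tau = \bullet_i$ we have $|\tau| = 1$, $\gamma(\bullet_i) = 1$, and by definition $\langle \mathbf{X}^h_{t_k,t_{k+1}}, \bullet_i \rangle = (X^h)^i(t_{k+1}) - (X^h)^i(t_k) = X^i_{t_k,t_{k+1}}$, which matches the claimed formula.

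For the inductive step, write $\tau = [\tau_1 \cdots \tau_n]_i$ with each $|\tau_j| < p$ and $\sum_j |\tau_j| = p - 1$. On $[t_k,t_{k+1}]$ linearity gives $X^h(u) = X(t_k) + \frac{u - t_k}{h_k}\, X_{t_k,t_{k+1}}$, so $(X^h)^i(u)$ is affine in $u$ with derivative $\tfrac{1}{h_k} X^i_{t_k,t_{k+1}}$; moreover $\langle \mathbf{X}^h_{t_k,u}, \sigma \rangle$ for a subtree $\sigma$ with $|\sigma| = q$ is, by the induction hypothesis applied on the subinterval $[t_k,u]$ (where $X^h$ is still the same affine segment), equal to $\frac{1}{\gamma(\sigma)}\big(\tfrac{u-t_k}{h_k}\big)^q X^{j_1}_{t_k,t_{k+1}}\cdots X^{j_q}_{t_k,t_{k+1}}$ — i.e. the same product of full increments times $\big(\tfrac{u-t_k}{h_k}\big)^{|\sigma|}$. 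Substituting into the defining integral,
\begin{align*}
 \langle \mathbf{X}^h_{t_k,t_{k+1}}, \tau \rangle = \int_{t_k}^{t_{k+1}} \langle \mathbf{X}^h_{t_k,u}, \tau_1 \rangle \cdots \langle \mathbf{X}^h_{t_k,u}, \tau_n \rangle \, d(X^h)^i(u),
\end{align*}
pulls out the product of all full increments with the prefactor $\prod_{j=1}^n \frac{1}{\gamma(\tau_j)}$, leaving the scalar integral $\int_{t_k}^{t_{k+1}} \big(\tfrac{u-t_k}{h_k}\big)^{|\tau_1| + \cdots + |\tau_n|}\, \tfrac{1}{h_k} X^i_{t_k,t_{k+1}}\, du = \frac{1}{|\tau|} X^i_{t_k,t_{k+1}}$, since $|\tau_1| + \cdots + |\tau_n| = |\tau| - 1$ and $\int_{t_k}^{t_{k+1}} \big(\tfrac{u-t_k}{h_k}\big)^{|\tau|-1} \tfrac{1}{h_k}\, du = \tfrac{1}{|\tau|}$.

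Combining, $\langle \mathbf{X}^h_{t_k,t_{k+1}}, \tau \rangle = \frac{1}{|\tau| \prod_{j=1}^n \gamma(\tau_j)} X^i_{t_k,t_{k+1}} \prod_{j=1}^n \big(\text{product of increments of } \tau_j\big)$, and since the labels of $\tau$ are exactly $i$ together with the labels of $\tau_1,\dots,\tau_n$, the product of increments is precisely $X^{i_1}_{t_k,t_{k+1}} \cdots X^{i_p}_{t_k,t_{k+1}}$. Finally, by the definition of $\gamma$ on labeled trees, $\gamma(\tau) = |\tau| \prod_{j=1}^n \gamma(\tau_j)$, so the prefactor is exactly $\tfrac{1}{\gamma(\tau)}$, completing the induction. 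I expect the only delicate point to be the bookkeeping in the inductive step — namely verifying that applying the induction hypothesis on the shortened interval $[t_k,u]$ correctly produces the scaling factor $\big(\tfrac{u-t_k}{h_k}\big)^{|\tau_j|}$ (this uses crucially that $X^h$ restricted to $[t_k,u]$ is the same line segment reparametrized, so that its increment over $[t_k,u]$ is $\tfrac{u-t_k}{h_k} X_{t_k,t_{k+1}}$) — but this is a routine consequence of affineness rather than a genuine obstacle.
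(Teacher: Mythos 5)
Your proof is correct and follows essentially the same route as the paper: induction over the tree structure using the recursive integral definition of the natural lift, with the key observation that on $[t_k,u]$ the segment is the same affine path with increment $\tfrac{u-t_k}{h_k}X_{t_k,t_{k+1}}$, yielding the scaling factor $\bigl(\tfrac{u-t_k}{h_k}\bigr)^{|\tau_j|}$. The paper merely formalizes the sub-interval step you flag as delicate by strengthening the induction hypothesis to the time-dependent identity $\langle \mathbf{X}^h_{t_k,t},\tau\rangle = \tfrac{1}{\gamma(\tau)}\bigl(\tfrac{t-t_k}{h_k}\bigr)^{p}X^{i_1}_{t_k,t_{k+1}}\cdots X^{i_p}_{t_k,t_{k+1}}$ and evaluating at $t=t_{k+1}$, which is exactly the bookkeeping you describe.
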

\begin{proof}
We prove by induction on the height of $\tau$ that \begin{align}\label{idtoprove}
\langle \mathbf{X}^h_{t_k, t}, \tau \rangle = \frac{1}{\gamma(\tau)} \left(\frac{t-t_k}{h_k}\right)^p X^{i_1}_{t_k, t_{k+1}}   X^{i_2}_{t_k, t_{k+1}}  \dots X^{i_p}_{t_k, t_{k+1}} 
                                                   \end{align}
for $t\in (t_k, t_{k+1}]$. Setting $t=t_{k+1}$ then yields the claim. The identity is true for $\tau = 1$ and $\tau = \bullet_{i_1}$.
Let us assume that (\ref{idtoprove}) is true for all sub-trees of $\tau = [\tau_1, \ldots, \tau_n]_{i_p}$. Then, according to 
Example \ref{example_smooth}, we have \begin{align*}
&\langle \mathbf{X}^h_{t_k, t}, \tau \rangle  = \int_{t_k}^t \langle \mathbf{X}^h_{t_k,u}, \tau_1 \rangle \cdots \langle \mathbf{X}^h_{t_k,u}, \tau_n \rangle\, dX^{h, i_p}(u) 
= \int_{t_k}^t \langle \mathbf{X}^h_{t_k,u}, \tau_1 \rangle \cdots \langle \mathbf{X}^h_{t_k,u}, \tau_n \rangle \frac{X^{i_p}_{t_k, t_{k+1}}}{h_k}\, du\\
&= \int_{t_k}^t \frac{1}{\gamma(\tau_1)} \left(\frac{u-t_k}{h_k}\right)^{p_1} X^{\tilde i_1}_{t_k, t_{k+1}}  \cdots X^{\tilde i_{p_1}}_{t_k, t_{k+1}}  \cdots 
\frac{1}{\gamma(\tau_n)} \left(\frac{u-t_k}{h_k}\right)^{p_n} X^{\bar i_1}_{t_k, t_{k+1}} \cdots X^{\bar i_{p_n}}_{t_k, t_{k+1}}  \frac{X^{i_p}_{t_k, t_{k+1}}}{h_k}\, du,
                                      \end{align*}
where $p_i:= |\tau_i|$ $(i=1, \ldots, n)$. Since $\sum_{i=1}^n p_i= p-1$ and $\Pi_{i=1}^n \frac{1}{\gamma(\tau_i)} = \frac{p}{\gamma(\tau)}$, we obtain
\begin{align*}
\langle \mathbf{X}^h_{t_k, t}, \tau \rangle  = 
\int_{t_k}^t \frac{p}{\gamma(\tau)} \left(\frac{u-t_k}{h_k}\right)^{p-1} \frac{1}{h_k} X^{i_1}_{t_k, t_{k+1}}  \cdots X^{i_{p}}_{t_k, t_{k+1}}\, du 
= \frac{1}{\gamma(\tau)} \left(\frac{t-t_k}{h_k}\right)^{p} X^{i_1}_{t_k, t_{k+1}}  \cdots X^{i_{p}}_{t_k, t_{k+1}}
\end{align*}
which concludes the proof of this proposition.
\end{proof}
The local error of the simplified Runge-Kutta scheme applied to (\ref{eqn:ODE}) is defined as  $le^h(t_0, y_0; h) : = y^h(t_0 + h) - y^h_1$. We can now rewrite (\ref{rate_local}) and (\ref{cond_loc_rate})
using Proposition \ref{branched_rp_WZ}. Moreover, within the series representation given in Theorem \ref{B-series_RK}, $a(\tau)(h)$ is replaced by $a^h(\tau)(h)$ if the simplifying ansatz
(\ref{ansatz_simple_scheme}) is used. Now, the order of the local error of (\ref{eqn:RK_equations_simple}) is
 \begin{align}\label{rate_local_WZ}
        |le^h(t_0, y_0; h)| = \mathcal{O}(h^{(p+1)\alpha})                                   
                                          \end{align}
for sufficiently small $h>0$ if and only if 
\begin{align}\label{cond_loc_rate_WZ}
 \frac{1}{\gamma(\tau)} X^{i_1}_{t_0, t_{0}+h}   X^{i_2}_{t_0, t_{0}+h}  \dots X^{i_{|\tau|}}_{t_0, t_{0}+h}   = a^h(\tau)(h)\quad \forall \tau \in \mathcal{T} \text{ with } |\tau|\leq p,
\end{align}
where $\alpha$ is the H\"older regularity of $X$. Based on (\ref{cond_loc_rate_WZ}), we aim to find proper choices of $\mathcal A$ and $b$ in \eqref{eqn:RK_equations_simple} that provide
the desired local rate in (\ref{rate_local_WZ}). In order to simplify the notation in the result below, we introduce $c_i:= \sum_{j=1}^s a_{ij}$. We now formulated conditions for the order of the local 
error associated to the simplified Runge-Kutta scheme.
\begin{theorem}\label{Thm_local_error}
The simplified Runge-Kutta method \eqref{eqn:RK_equations_simple} approximating (\ref{eqn:ODE}) has a local error of order $(p+1)\alpha$, i.e., 
\begin{align*}
        |le^h(t_0, y_0; h)| = \mathcal{O}(h^{(p+1)\alpha})                                   
                                          \end{align*}
if and only if the following conditions are satisfied for all $\ell=1, \ldots, p$:
\begin{table}[th]
\centering
\[ \begin{array}{l|l}
         \ell \\ \hline ~\\[-3mm]
         1 & \sum\limits_{i=1}^sb_i\;=\;1 \\[3mm] \hline ~\\[-3mm]
         2 & \sum\limits_{i=1}^sb_ic_i\;=\;\frac12 \\[3mm] \hline ~\\[-3mm]
         3 & \sum\limits_{i=1}^sb_ic_i^2\;=\;\frac13\;,\qquad
             \sum\limits_{i=1}^s\sum\limits_{j=1}^sb_ia_{ij}c_j\;=\;\frac16 \\[3mm] %\hline ~\\[-3mm]
        % 4 & \sum\limits_{i=1}^sb_ic_i^3\;=\;\frac14\;,\qquad
         %    \sum\limits_{i=1}^s\sum\limits_{j=1}^sb_ic_ia_{ij}c_j\;=\;\frac18 \\[3mm]
          % & \sum\limits_{i=1}^s\sum\limits_{j=1}^sb_ia_{ij}c_j^2\;=\;\frac{1}{12} \;,\qquad
           %  \sum\limits_{i=1}^s\sum\limits_{j=1}^s\sum\limits_{k=1}^sb_ia_{ij}a_{jk}c_k\;=\;\frac{1}{24} \\[3mm]
         \hline \end{array}\]
\caption{Algebraic conditions for the local error of the simplified Runge-Kutta method.}
\label{table1}
\end{table}
\end{theorem}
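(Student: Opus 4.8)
The plan is to specialize the general local-error characterization \eqref{cond_loc_rate_WZ} to low-order trees and read off the algebraic conditions by a direct, tree-by-tree computation. Recall that by Theorem \ref{B-series_RDE}, Lemma \ref{lemma_symmetry_factor}, and the discussion culminating in \eqref{rate_local_WZ}--\eqref{cond_loc_rate_WZ}, the local error of \eqref{eqn:RK_equations_simple} is $\mathcal{O}(h^{(p+1)\alpha})$ if and only if, for every $\tau \in \mathcal{T}$ with $|\tau| \le p$,
\begin{align*}
 a^h(\tau)(h) = \frac{1}{\gamma(\tau)}\, X^{i_1}_{t_0,t_0+h} \cdots X^{i_{|\tau|}}_{t_0,t_0+h},
\end{align*}
where the $i_\ell$ are the labels of $\tau$. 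So the proof reduces to (a) enumerating all labeled trees of order $\le p$, (b) computing $a^h(\tau)(h)$ for the simplified ansatz \eqref{ansatz_simple_scheme} from the recursion defining $\Phi$ and $a$, and (c) checking that the displayed identity holds for all choices of labels $i_1,\ldots,i_{|\tau|} \in \{1,\ldots,d\}$ simultaneously, which is what forces the purely numerical conditions in Table \ref{table1}.

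The computation of $a^h(\tau)(h)$ is the routine core. Plugging $Z^{(k)}_{ij} = a_{ij}X^k_{t_0,t_0+h}$ and $z^{(k)}_i = b_i X^k_{t_0,t_0+h}$ into the definitions, one sees that $\Phi(\bullet_i)(h) = \mathbf{1}_s$, that for $\tau = [\tau_1\cdots\tau_n]_i$ the vector $\Phi(\tau)(h)$ equals the component-wise product $\prod_{j=1}^n \bigl(\mathcal{A}\,\Phi(\tau_j)(h)\bigr)$ times the scalar $\prod_{j=1}^n\langle\text{labels of }\tau_j\rangle$-increments, and that $a^h(\tau)(h) = \langle b, \prod_{k=1}^n \Phi(\tau_k)(h)\rangle$ times the full product of label-increments of $\tau$. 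In particular, the product of one-dimensional increments $X^{i_1}\cdots X^{i_{|\tau|}}$ factors out of both sides of the identity, so the label structure drops out entirely and what remains is a scalar identity in $\mathcal{A}$ and $b$ only. For the four relevant trees — $\bullet$ (order $1$), the two-vertex tree (order $2$), the ``cherry'' $[\bullet\,\bullet]$ and the ``chain'' $[[\bullet]]$ (both order $3$) — this yields, respectively, $\sum_i b_i = 1$ (since $\gamma(\bullet)=1$), $\sum_i b_i c_i = \tfrac12$ (using $\gamma = 2$ and $c_i = \sum_j a_{ij}$), $\sum_i b_i c_i^2 = \tfrac13$ (cherry, $\gamma = 3$), and $\sum_{i,j} b_i a_{ij} c_j = \tfrac16$ (chain, $\gamma = 6$). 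One should double-check the symmetry/repetition bookkeeping: for the cherry the two identical branches $\bullet$ contribute $(\mathcal{A}\mathbf{1}_s)\cdot(\mathcal{A}\mathbf{1}_s) = c\odot c$ component-wise, giving $c_i^2$ rather than anything involving $r_1!$, because the combinatorial $\sigma$- and $\gamma$-factors are already absorbed into $\gamma(\tau)$ via Lemma \ref{lemma_symmetry_factor}.

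I would organize the write-up as: first state the reduction to \eqref{cond_loc_rate_WZ}; then note that the product of increments factors out so that it suffices to verify the scalar identities $a^h(\tau)(h)/\bigl(X^{i_1}\cdots X^{i_{|\tau|}}\bigr) = 1/\gamma(\tau)$; then run through the at most four trees of order $\le 3$ one by one, recording $\Phi(\tau)$ and $a^h(\tau)$ and the value of $\gamma(\tau)$ in each case; finally collect the resulting equations into Table \ref{table1}. The only genuine subtlety — and the step I expect to need the most care — is the ``if and only if'' direction: one must argue that the order-$p$ local rate is equivalent to \eqref{cond_loc_rate_WZ} holding for \emph{all} labeled trees of order $\le p$, and that since the increments $X^{i_1}\cdots X^{i_{|\tau|}}$ are (for a generic driver) algebraically independent across label choices, no cancellation among trees of different label-types is possible, so each tree's scalar condition must hold separately. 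Everything else is bookkeeping with the recursions for $\Phi$, $a$, and $\gamma$.
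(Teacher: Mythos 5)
Your proposal follows essentially the same route as the paper's proof: reduce to the characterization \eqref{cond_loc_rate_WZ}, enumerate the four labeled tree shapes of order $\le 3$, compute $a^h(\tau)(h)$ from the recursion under the ansatz \eqref{ansatz_simple_scheme} so that the product of increments $X^{i_1}_{t_0,t_0+h}\cdots X^{i_{|\tau|}}_{t_0,t_0+h}$ factors out, and match against $1/\gamma(\tau)$ to obtain exactly the conditions of Table~\ref{table1}. One small slip in your intermediate description: $\Phi(\bullet_i)(h) = Z^{(i)}\mathbf{1}_s = (\mathcal{A}\mathbf{1}_s)\,X^i_{t_0,t_0+h}$, not $\mathbf{1}_s$ (the latter is $\Phi(1)(h)$), but your downstream computations for the cherry and the chain are consistent with the correct recursion, so the resulting conditions are the right ones.
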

\begin{proof}
Let $i_1, i_2, i_3\in\{1, \ldots, d\}$. We start analyzing (\ref{cond_loc_rate_WZ}) for all trees of order one, i.e.,  $\tau= \bullet_{i_1}$. Using the definition of $a^h(\tau)(h)$, i.e., we plug in 
(\ref{ansatz_simple_scheme}) in the definition of $a(\tau)(h)$, we obtain \begin{align*}
a^h(\bullet_{i_1})(h) := \langle z^{(i_1)}, \Phi(1)(h) \rangle = \sum_{i=1}^s z^{(i_1)}_i = \sum_{i=1}^s b_i X^{i_1}_{t_0, t_{0}+h}.
\end{align*}
Inserting this into (\ref{cond_loc_rate_WZ}), we find \begin{align*}
                           \frac{1}{\gamma(\bullet_{i_1})} X^{i_1}_{t_0, t_{0}+h} =   \sum_{i=1}^s b_i X^{i_1}_{t_0, t_{0}+h}
                                                      \end{align*}
which is equivalent to $\sum_{i=1}^s b_i=1$. We continue with the trees of order two. These are of the form $\tau = [\bullet_{i_2}]_{i_1}$. Again, we determine $a^h(\tau)(h)$ which is 
\begin{align*}
a^h(\tau)(h) := \langle z^{(i_1)}, \Phi(\bullet_{i_2})(h) \rangle =  \langle z^{(i_1)}, Z^{(i_2)} \Phi(1)(h)\rangle =  \langle b, \mathcal A\mathbf 1_s\rangle X^{i_1}_{t_0, t_{0}+h}X^{i_2}_{t_0, t_{0}+h},
\end{align*}
using the representations in (\ref{ansatz_simple_scheme}). With this expression for $a^h(\tau)(h)$, (\ref{cond_loc_rate_WZ}) becomes \begin{align*}
 \frac{1}{2} X^{i_1}_{t_0, t_{0}+h}   X^{i_2}_{t_0, t_{0}+h}   =  \langle b, \mathcal A\mathbf 1_s\rangle X^{i_1}_{t_0, t_{0}+h}X^{i_2}_{t_0, t_{0}+h}
\end{align*}
exploiting that $\gamma(\tau) = 2$. This is equivalent to $\sum_{i=1}^s b_i c_i = \frac{1}{2}$. We conclude this proof by considering the order three trees. We start with trees of the form
$\tau = [[\bullet_{i_3}]_{i_2}]_{i_1}$. Then, $a^h(\tau)(h)$ is
\begin{align*}
  \quad a^h(\tau)(h) &= \langle z^{(i_1)}, \Phi([\bullet_{i_3}]_{i_2})(h) \rangle = \langle z^{(i_1)}, Z^{(i_2)} \Phi(\bullet_{i_3})(h) \rangle = \langle z^{(i_1)}, Z^{(i_2)} Z^{(i_3)} \mathbf 1_s\rangle\\
  &=  \langle b, \mathcal A (\mathcal A \mathbf 1_s)\rangle X^{i_1}_{t_0, t_{0}+h}X^{i_2}_{t_0, t_{0}+h} X^{i_3}_{t_0, t_{0}+h}.
\end{align*}
Moreover, we see that $\gamma(\tau) = 6$. Using the above, (\ref{cond_loc_rate_WZ}) for $\tau = [[\bullet_{i_3}]_{i_2}]_{i_1}$ is equivalent to 
\begin{align*}
\frac{1}{6} = \langle b, \mathcal A (\mathcal A \mathbf 1_s)\rangle = \sum_{i=1}^s\sum_{j=1}^s b_i a_{ij} c_j.
\end{align*}
Now, the only type of tree left is the branched tree $\tau = [\bullet_{i_2}, \bullet_{i_3}]_{i_1}$. The corresponding $a^h(\tau)(h)$ is 
\begin{align*}
  \quad a^h(\tau)(h) &= \langle z^{(i_1)}, \Phi(\bullet_{i_2})(h) \Phi(\bullet_{i_3})(h)\rangle  = \langle z^{(i_1)}, Z^{(i_2)} \mathbf 1_s Z^{(i_3)} \mathbf 1_s\rangle\\
  &=  \langle b, (\mathcal A \mathbf 1_s) (\mathcal A \mathbf 1_s)\rangle X^{i_1}_{t_0, t_{0}+h}X^{i_2}_{t_0, t_{0}+h} X^{i_3}_{t_0, t_{0}+h}.
\end{align*}
Notice that the product of two vectors is meant component-wise. For this tree, (\ref{cond_loc_rate_WZ}) therefore is equivalent to \begin{align*}
    \frac{1}{3} = \frac{1}{\gamma(\tau)} = \langle b, (\mathcal A\mathbf 1_s) (\mathcal A \mathbf 1_s)\rangle = \sum_{i=1}^s b_i c_i^2                                                                                                                               
                                                                                                                                   \end{align*}
which finally proves the claim.
                                                                                                                                   \end{proof}
                                                                                                                                   
                                                                                                                                   \begin{remark}
In fact, we can easily find algebraic conditions for any $\ell>3$ in Table \ref{table1} by considering trees $\tau\in\mathcal T$ with $|\tau|>3$ in \eqref{cond_loc_rate_WZ}. This means that we can achieve
a local rate of $(p+1)\alpha$ for the simplified Runge-Kutta method for arbitrary $p\in \mathbb N$.
\end{remark}
The conditions given in Table \ref{table1} are nothing but the consistency conditions known for the case of $f\equiv 0$ in (\ref{eqn:RDE_with_drift}), see, e.g., \cite{HairerLubichWanner}.
The consistency order is the order in the step size $h$ 
of the expression $\frac{le(t_0, x_0; h)}{h}$. If all conditions in Table \ref{table1} are fulfilled, then one has a scheme of consistency order $3$ assuming $f\equiv 0$ in (\ref{eqn:RDE_with_drift}). Such $3$rd order schemes 
are well-studied in the ordinary differential equation scenario. Below, we provide just a few examples that satisfy these conditions.
\begin{example}\label{order3_methods}
We introduce the general Butcher-scheme:\[\mathcal{BS} := \begin{array}{l|llll}
      c_1& a_{11}&a_{12}&\dots&a_{s1} \\ c_2 & a_{21} & a_{22}&\dots&a_{2s} \\ 
      \vdots & \vdots &  \vdots & \ddots&\vdots \\
      c_s & a_{s1} & a_{s2} & \cdots & a_{ss} \\
      \hline
      & b_1 & b_2 & \cdots  & b_s \end{array}.\]
\begin{itemize}
 \item[(i)] An explicit Runge-Kutta scheme satisfying all conditions in Table \ref{table1} is Heun's third-order method:
 \[\mathcal{BS} = \begin{array}{l|lll}
      0& 0 & 0 & 0 \\ 
      1/3 & 1/3 & 0 & 0 \\ 
      2/3 & 0 & 2/3 & 0 \\
      \hline
      & 1/4 & 0 & 3/4 \end{array}.\]
Hence, the iterative scheme (\ref{eqn:RK_equations_simple}) is \begin{align*}
       y_{n+1}^h = y_n^h + \frac{1}{4}[f(y_n^h ) + 3f(Y^h_3)]  X_{t_n, t_{n+1}}
 \end{align*}
where $Y_3^h$ is given by \begin{align*}
 Y_3^h & = y_n^h +  \frac{2}{3} f(Y^h_2)  X_{t_n, t_{n+1}} \quad  \text{with}\quad     Y_2^h  = y_n^h +  \frac{1}{3} f(y_n^h)  X_{t_n, t_{n+1}}.                   
                      \end{align*}
 \item[(ii)] Another explicit method fulfilling the conditions in Table \ref{table1} is Kutta's third order scheme:
  \[\mathcal{BS} = \begin{array}{l|lll}
      0& 0 & 0 & 0 \\ 
      1/2 & 1/2 & 0 & 0 \\ 
      1 & -1 & 2 & 0 \\
      \hline
      & 1/6 & 2/3 & 1/6 \end{array}.\]
 Consequently, the simplified Runge-Kutta method is 
 \begin{align*}
    y_{n+1}^h &= y_n^h + \frac{1}{6}[f(y_n^h) + 4 f(Y^h_2) +f(Y^h_3) ] X_{t_n, t_{n+1}},
 \end{align*}
 where $Y_2^h $ and $Y_3^h$ are computed by \begin{align*}
Y_2^h = y_n^h + \frac{1}{2} f(y_n^h)  X_{t_n, t_{n+1}} \quad \text{and} \quad Y_3^h = y_n^h + [- f(y_n^h) + 2 f(Y^h_2) ]  X_{t_n, t_{n+1}}.     
       \end{align*}
 \end{itemize}
 \end{example}
Notice that there is much more schemes satisfying the above conditions, e.g., \cite[Corollary 5.1]{HHW18} provide two implicit Runge-Kutta methods (for stochastic differential equations 
driven by a certain class of Gaussian processes) that satisfy the requirements in Table \ref{table1}.

\section{Global rates}\label{sec:global_rates}

\subsection{Global rate of the full Runge-Kutta scheme}

Let $y_t^{s, y_0}$ denote the solution to (\ref{eqn:RDE_without_drift}) at time $t$ starting in $y_0$ at $s$, i.e., $y_s^{s, y_0}= y_0$.
Let a numerical scheme be given as the following one step method: 
  \begin{align*}
  y_{n+1} = y_n + \Phi(y_n, \mathbf{X}_{t_n,t_{n+1}}),
 \end{align*}
 where $t_0 < t_1 < \ldots < t_N = T$ is a partition of $[t_0,T]$.
Below, we analyze the order of convergence of the numerical method \eqref{eqn:RK_equations}. The next proposition shows that we loose one order from 
the local to the global error.
\begin{proposition}\label{globale_rate}
 If there is a constant $C_1>0$ such that \begin{align}\label{assumption1}
\vert y_t^{s, y_0} - y_0 - \Phi(y_0,\mathbf{X}_{s,t})\vert \leq C_1 \vert t-s\vert^{1+r}
 \end{align}
for $\vert t-s\vert$ being sufficiently small and if
\begin{align}\label{assumption2}
\vert y_t^{s, y_0} - y_t^{s, \tilde y_0}\vert \leq C_2 \vert y_0-\tilde y_0\vert 
 \end{align}
for some constant $C_2>0$, where $y_0,\tilde y_0\in\mathbb R^m$ and $0\leq s\leq t\leq T$. Then, there is some $C>0$ such that
\begin{align*}
\max_{k=0, \ldots, N}\vert y(t_k) - y_k \vert \leq C h^r
\end{align*}
for $r>0$, where $h=\max_{k=0,\ldots, N-1}\vert t_{k+1}-t_k\vert$.
\end{proposition}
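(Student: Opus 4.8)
The plan is to run the classical telescoping argument (``Lady Windermere's fan'') to pass from the one‑step estimate \eqref{assumption1} to a global bound, using the Lipschitz stability \eqref{assumption2} to control the accumulation of errors. Fix $k \in \{0,\dots,N\}$ and introduce, for $0 \le j \le k$, the auxiliary quantity $E_j := y_{t_k}^{t_j, y_j}$, i.e. the exact solution evaluated at time $t_k$ when started from the numerical value $y_j$ at time $t_j$. Since the scheme starts from the exact value $y_0 = y(t_0)$ and $y(\cdot) = y_\cdot^{t_0,y_0}$, the flow property of the exact solution gives $E_0 = y_{t_k}^{t_0,y_0} = y(t_k)$, while trivially $E_k = y_{t_k}^{t_k,y_k} = y_k$. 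Hence the global error telescopes:
\[
 y(t_k) - y_k = E_0 - E_k = \sum_{j=0}^{k-1}(E_j - E_{j+1}).
\]

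Next I would estimate each summand. Using the semigroup/flow property of the exact solution in the form $y_{t_k}^{t_j,y_j} = y_{t_k}^{t_{j+1},\, y_{t_{j+1}}^{t_j,y_j}}$, the difference $E_j - E_{j+1}$ is the difference of two exact solutions started at time $t_{j+1}$ from the two initial values $y_{t_{j+1}}^{t_j,y_j}$ and $y_{j+1}$; assumption \eqref{assumption2} therefore yields $|E_j - E_{j+1}| \le C_2\, |y_{t_{j+1}}^{t_j,y_j} - y_{j+1}|$. Now $y_{j+1} = y_j + \Phi(y_j,\mathbf{X}_{t_j,t_{j+1}})$ is exactly one step of the scheme started from the point $y_j$, so $y_{t_{j+1}}^{t_j,y_j} - y_{j+1} = y_{t_{j+1}}^{t_j,y_j} - y_j - \Phi(y_j,\mathbf{X}_{t_j,t_{j+1}})$, which is precisely the local‑error expression appearing in \eqref{assumption1}. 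Since $h_j := t_{j+1}-t_j \le h$ is small, \eqref{assumption1} gives $|y_{t_{j+1}}^{t_j,y_j} - y_{j+1}| \le C_1 h_j^{1+r}$, hence $|E_j - E_{j+1}| \le C_1 C_2\, h_j^{1+r}$.

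Summing over $j$ and using $h_j^{1+r} \le h^r h_j$ together with $\sum_{j=0}^{N-1} h_j = T - t_0$, I obtain
\[
 |y(t_k) - y_k| \le C_1 C_2 \sum_{j=0}^{k-1} h_j^{1+r} \le C_1 C_2\, h^r \sum_{j=0}^{N-1} h_j = C_1 C_2 (T-t_0)\, h^r,
\]
so that $C := C_1 C_2 (T - t_0)$ works uniformly in $k$, which is the claim. I do not expect a genuine obstacle here; the only point requiring a little care is to retain the individual step sizes $h_j$ and invoke $\sum_j h_j = T - t_0$ rather than crudely bounding the number of steps $N$ by $h^{-1}$ (which would only be legitimate on a quasi‑uniform grid) — this is exactly what prevents the number of steps from consuming a power of $h$ and causes the ``loss of one order'' from local to global. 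Everything else is the standard telescoping, with the two hypotheses \eqref{assumption1} and \eqref{assumption2} being supplied respectively by the local‑error analysis of the previous sections and by the local Lipschitz continuity of the rough‑path solution map recorded in \eqref{eqn:local_hoelder_rp}.
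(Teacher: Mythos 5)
Your argument is correct and coincides with the paper's own proof: the same telescoping decomposition $y(t_k)-y_k=\sum_{j=0}^{k-1}\bigl(y_{t_k}^{t_j,y_j}-y_{t_k}^{t_{j+1},y_{j+1}}\bigr)$, the same use of the flow property to reduce each summand to a local error controlled via \eqref{assumption2} and \eqref{assumption1}, and the same summation $\sum_j h_j^{1+r}\le h^r(T-t_0)$. Nothing to add.
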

\begin{proof}
We write the global error as follows 
  \begin{align}\label{telescopesum}
y_{k} - y(t_{k}) =\sum_{j=0}^{k-1} \left(y_{t_{k}}^{t_{j+1}, y_{j+1}}- y_{t_{k}}^{t_j, y_j}\right)
\end{align}                                     
using that $y_{t_{k}}^{t_0, y_0}=y(t_{k})$ and $y_{t_{k}}^{t_{k}, y_{k}}= y_{k}$. We combine \begin{align*}
        y_{t_{k}}^{t_j, y_j}= y_{t_{k}}^{t_{j+1}, y_{t_{j+1}}^{t_j, y_j}}\quad \text{and}\quad y_{j+1}  = y_j + \Phi(y_j,\mathbf{X}_{t_j,t_{j+1}})
                \end{align*}
with (\ref{telescopesum}) which yields \begin{align*}
\vert y(t_{k}) - y_{k} \vert &=\sum_{j=0}^{k-1}\vert y_{t_{k}}^{t_{j+1}, y_j + \Phi(y_j,\mathbf{X}_{t_j,t_{j+1}})} - y_{t_{k}}^{t_{j+1}, y_{t_{j+1}}^{t_j, y_j}}\vert 
\leq C_2 \sum_{j=0}^{k-1}\vert y_j + \Phi(y_j,\mathbf{X}_{t_j,t_{j+1}}) -  y_{t_{j+1}}^{t_j, y_j}\vert\\
&\leq C_1 C_2 \sum_{j=0}^{k-1}\vert t_{j+1}-t_j \vert^{r+1}\leq C_1 C_2 h^r \sum_{j=0}^{k-1}(t_{j+1}-t_j)\leq C_1 C_2 (T-t_0) h^r 
\end{align*} 
exploiting assumptions (\ref{assumption1}) and (\ref{assumption2}). This concludes the proof of this proposition.
\end{proof}

\subsection{Global rate of the simplified Runge-Kutta scheme}

In this section, we study a particular case of $\mathbf X$ being an $\alpha$-H\"older geometric rough path, $0 < \alpha \leq 1$, that can be approximated by the lift of its piece-wise linear approximated underlying path $X$. For such driver, the simplified Runge-Kutta method (\ref{eqn:RK_equations_simple}) converges. Its order is shown in the following theorem.

\begin{theorem}\label{thm:rates_simple_rk}
Let $\mathbf X$ be an $\alpha$-H\"older geometric rough path in (\ref{eqn:RDE_without_drift}), $0 < \alpha \leq 1$,
and let its piece-wise linear approximation $X^h$ be given by (\ref{piece-wise_linear}). We assume that  the Wong-Zakai approximation converges with rate $r_0>0$, meaning that
\begin{align}\label{eqn:wong_zakai}
 \sup_{t\in [t_0, T]}|y(t)-y^h(t)| =  \mathcal O(h^{r_0})
\end{align}
for sufficiently small $h$, where $h=\max_{k=0,\ldots, N-1}\vert t_{k+1}-t_k\vert$ is the maximal step size of the underlying grid, $y$ and $y^h$ are the solutions to (\ref{eqn:RDE_without_drift}) and (\ref{eqn:ODE}), respectively. 
If all conditions in Table \ref{table1} are satisfied and the right hand side $f$ is of class $\operatorname{Lip}^{\gamma}_b$ for some $\gamma > \frac{1}{\alpha}$,
% Lipschitz continuous \textcolor{red}{Do we have to assume Lipschitz or is it given because $f$ has a bounded derivative?},
then the simplified Runge-Kutta method (\ref{eqn:RK_equations_simple} converges 
with rate $\eta=\min\{r_0, 4\alpha-1\}$ to the solution of (\ref{eqn:RDE_without_drift}), i.e., 
there is a constant $C>0$ such that \begin{align*}
\max_{k=0, \ldots, N}\vert y(t_k) - y_k^h \vert \leq C h^\eta
\end{align*}
for sufficiently small $h$.
\end{theorem}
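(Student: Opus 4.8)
The plan is to insert the Wong-Zakai approximation $y^h$ (the solution of \eqref{eqn:ODE}) between the true solution $y$ and the numerical output $y^h_k$, and to estimate the two resulting errors separately. By the triangle inequality,
\[
 \vert y(t_k) - y^h_k\vert \le \vert y(t_k) - y^h(t_k)\vert + \vert y^h(t_k) - y^h_k\vert ,
\]
and the first summand is $\mathcal{O}(h^{r_0})$ uniformly in $k$ by the Wong-Zakai assumption \eqref{eqn:wong_zakai}. Hence it suffices to show that the simplified Runge-Kutta scheme, viewed as a one-step method for the Riemann-Stieltjes (equivalently rough) equation \eqref{eqn:ODE} driven by $\mathbf{X}^h$, satisfies $\max_k \vert y^h(t_k) - y^h_k\vert \le C h^{4\alpha - 1}$ with $C$ independent of $h$; combining the two bounds then yields the rate $\eta = \min\{r_0, 4\alpha - 1\}$.

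For the second error I would apply Proposition \ref{globale_rate} with $\mathbf{X}$ replaced by $\mathbf{X}^h$, with $y_t^{s,y_0}$ the corresponding solution flow of \eqref{eqn:ODE}, and with $\Phi(y_n,\cdot)$ the simplified Runge-Kutta increment map. Its two hypotheses must be verified uniformly in $h$ and in the base point. Hypothesis \eqref{assumption1} with $r = 4\alpha - 1$ is exactly the one-step local error bound: since all conditions of Table \ref{table1} hold, i.e.\ the scheme has order $p = 3$, Theorem \ref{Thm_local_error} (whose proof applies verbatim from an arbitrary base point) gives $\vert le^h\vert = \mathcal{O}(h^{4\alpha})$, the implied constant depending only on the $\operatorname{Lip}^{\gamma}_b$-norm of $f$ and on the H\"older norm of $X^h$ on the step interval. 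Hypothesis \eqref{assumption2}, the Lipschitz dependence of the solution of \eqref{eqn:ODE} on its initial value, follows from the continuity of the solution map of a rough differential equation when $f$ is of class $\operatorname{Lip}^{\gamma}_b$ with $\gamma > 1/\alpha$, cf.\ \cite{Gub10,FV10}, the Lipschitz constant being controlled by $\Vert f\Vert_{\operatorname{Lip}^{\gamma}_b}$ and $\Vert \mathbf{X}^h\Vert_{\alpha}$.

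The crux---and the step I expect to be the main obstacle---is the uniform control $\sup_h \Vert \mathbf{X}^h\Vert_{\alpha} < \infty$, together with the analogous bounds on the higher tree-levels entering $\varrho_{\alpha}$, which is what makes the constants $C_1, C_2$ of Proposition \ref{globale_rate} independent of $h$. This is precisely where geometricity and the piecewise-linear structure \eqref{piece-wise_linear} enter: the convergence $\varrho^g_{\alpha}(\mathbf{X}^h,\mathbf{X}) \to 0$ forces $\Vert \mathbf{X}^h\Vert_{\alpha}$ to stay bounded along the approximating sequence, while Proposition \ref{branched_rp_WZ} exhibits $\langle \mathbf{X}^h_{t_k, t_{k+1}}, \tau\rangle$ explicitly as a multiple of a product of increments of $X$, which makes the local H\"older estimates on the short intervals $[t_k, t_{k+1}]$ transparent. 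Granting this uniformity, Proposition \ref{globale_rate} yields $\max_k \vert y^h(t_k) - y^h_k\vert \le C h^{4\alpha - 1}$, and then
\[
 \max_k \vert y(t_k) - y^h_k\vert \le C' h^{r_0} + C h^{4\alpha - 1} \le C'' h^{\min\{r_0,\, 4\alpha - 1\}} = C'' h^{\eta}
\]
for all sufficiently small $h$ (assuming $4\alpha > 1$, so that $4\alpha - 1 > 0$; otherwise the asserted bound is trivial since the solutions remain bounded), which is the claim.
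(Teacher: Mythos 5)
Your proof follows essentially the same route as the paper's: the triangle-inequality split through the Wong--Zakai approximation $y^h$, the local error of order $4\alpha$ from Theorem \ref{Thm_local_error}, Proposition \ref{globale_rate} to pass to the global rate $4\alpha-1$, and the Lipschitz dependence of the flow of \eqref{eqn:ODE} on its initial value (with constants uniform in $h$ because $\mathbf{X}^h$ converges and is hence bounded in $h$) to verify \eqref{assumption2}. Your explicit flagging of the $h$-uniformity of the constants is a correct and slightly more careful articulation of the point the paper handles in one sentence via the boundedness of $\mathbf{X}^h$.
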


\begin{proof}% [Proof of Theorem \ref{thm:rates_simple_rk}.]
It holds that  \begin{align*}
\vert y(t_k) - y_k^h \vert \leq \sup_{t\in [t_0, T]}|y(t)-y^h(t)| + \max_{k=0, \ldots, N}\vert y^h(t_k) - y_k^h \vert.
\end{align*}
Theorem \ref{Thm_local_error} gives us a rate of $4\alpha$ for the local error of simplified Runge-Kutta method. Proposition \ref{globale_rate} now provides that 
\begin{align*}
 \max_{k=0, \ldots, N}\vert y^h(t_k) - y_k^h \vert = \mathcal O(h^{4 \alpha - 1})
\end{align*}
if assumption (\ref{assumption2}) holds true. Let $y^{h, s, y_0}_t$ denote the solution to (\ref{eqn:ODE}) with initial time $s$ and initial state $y_0$.
Since $\mathbf X$ is $\alpha$ H\"older and since $\mathbf X^h$ is convergent and hence bounded in $h$, there is a constant $K>0$ independent of $h$ such that
\begin{align*}
\vert y_t^{h, s, y_0} - y_t^{h, s, \tilde y_0}\vert \leq K \expn^{K\vert t-s\vert^\alpha} \vert y_0-\tilde y_0\vert,
 \end{align*}
 cf. \cite{FV10}. This implies (\ref{assumption2}) and concludes the proof.
\end{proof}

\begin{remark}
\begin{itemize}
 \item[(i)] From \eqref{eqn:local_hoelder_rp}, a sufficient condition for \eqref{eqn:wong_zakai} is that $\varrho_{\alpha'}^g(\mathbf X^h,\mathbf X) = \mathcal O(h^{r_0})$ for some $0 < \alpha' \leq \alpha$ in which case one has to assume $f \in \operatorname{Lip}^{\gamma}_b$ for some $\gamma > \frac{1}{\alpha'}$.
 \item[(ii)] Theorem \ref{thm:rates_simple_rk} is formulated for any roughness parameter $\alpha > 0$. For a fractional Brownian motion with Hurst parameter $H \in (1/4,1)$, 
 it gives an optimal rate of convergence in the case when $H \in (1/4,1/2]$. Indeed, from \cite{FR14}, we know that $r_0$ can be chosen arbitrarily close to $2H - 1/2$. Since $2H - 1/2 < 4H -1$,
 the convergence rate of the simplified Runge-Kutta scheme is arbitrarily close to $2H - 1/2$. This rate is the same as for the simplified Milstein scheme introduced in \cite{DNT12}, cf. \cite{FR14},
 which is believed to be optimal due to the results obtained in \cite{NTU10}. 
%  \item[(iii)] For a fractional Brownian motion with Hurst parameter $H > 1/2$, the Wong-Zakai approximation is of order $H$ which can be obtained by the same strategy used for the rough case in \cite{FR14}. From \cite{HPS03}, this rate is optimal. However,  
\end{itemize}

\end{remark}

\section{Numerical experiments}\label{sec:numerics}

We illustrate the rate of convergence of a scheme presented in Example \ref{order3_methods}. In particular, we apply Heun's method to (\ref{eqn:RDE_without_drift}) with $f_1(y) = \cos(y)$, $f_2(y)=\sin(y)$ and $y(t)\in\mathbb R$.
Then, we have \begin{align}\label{eq:numerics}
 dy(t) = \cos(y(t))\, d\mathbf{X}^1(t)+\sin(y(t))\, d\mathbf{X}^2(t),\quad y(0) = 1, \quad t \in [0, T].
\end{align}
We assume that $X$ is a path of a two-dimensional fractional Brownian motion with independent components and Hurst index $H=0.4$. Moreover, $\mathbf X$ denotes its geometric lift and $T=0.25$. This example was considered by Deya, Neuenkirch and Tindel in \cite{DNT12} in the context of rates of convergence for a Milstein scheme. We use equidistant grid points, i.e., $h=\frac{T}{N}$. We determine the maximal discretization error 
\begin{align*}
 \mathcal E(h):= \max_{k=0, \ldots, N}\vert y(t_k) - y_k^h \vert
\end{align*}
for different $h$, where $y_k^h$ is the $k$th iterate of the simplified scheme in (\ref{eqn:RK_equations_simple}) with coefficients defined in Example \ref{order3_methods} (i).
There is no explicit representation for the solution to 
(\ref{eq:numerics}). Therefore, we create a reference solution based on the numerical method for a very small step size. In Figure \ref{error_plot}, the red circles show $\mathcal E(h)$ in dependence of $h$ 
for three paths $X$ of a fractional Brownian motion. The theoretical rate of convergence for the Heun method is $2H -0.5 = 0.3$. The slopes of the regression lines in blue confirm this rate up to an acceptable deviation. 
This deviation can be explained by the fluctuations that can be expected due to the underlying small rate of convergence.
    \begin{figure}[ht]
\centering
\includegraphics[width=0.32 \linewidth]{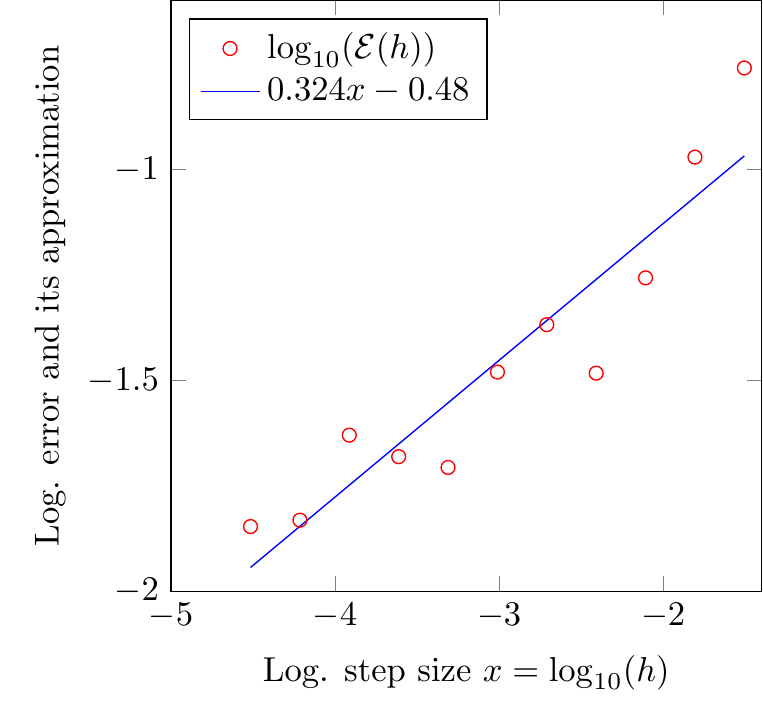}\includegraphics[width=0.32 \linewidth]{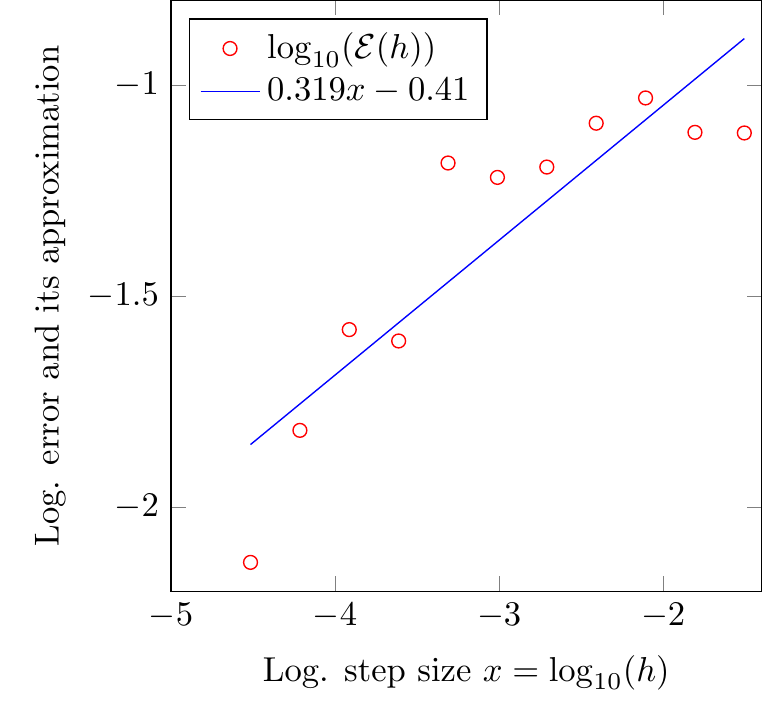}\includegraphics[width=0.32 \linewidth]{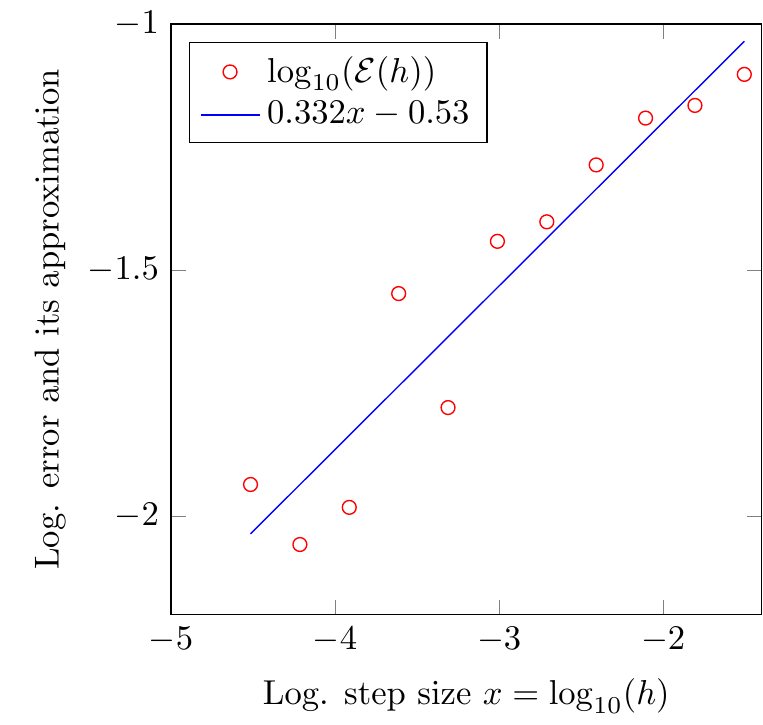}
\caption{Maximum discretization error of Heun method applied to (\ref{eq:numerics}) for three paths of fractional Brownian motion with $H=0.4$. }\label{error_plot}
\end{figure}                                                      

%and all matrices $N_k$ commute. Consequently, we have an explicit solution to the above rough differential equation given by \begin{align*}
%      y(t) = \expn^{\sum_{k=1}^d N_k X^k(t)}y_0     
%  \end{align*}
% which can be easily checked by using the chain rule. We now fix $d=2$, choose $y(t)\in\mathbb R^2$ and set 
% \begin{align*}
%  N_1= \begin{pmatrix}
%     1&0.5\\
%     0.5&1
%   \end{pmatrix} = V \begin{pmatrix}
%     0.5& 0\\
%     0&1.5
%   \end{pmatrix}V\quad \text{and}\quad N_2 =\begin{pmatrix}
%     2&-1\\
%     -1&2
%   \end{pmatrix} = V \begin{pmatrix}
%     3& 0\\
%     0&1
%   \end{pmatrix}V,
%  \end{align*}
% where the matrix of the eigenvalue decomposition is  $V = V^{-1} = \left(\begin{smallmatrix}
%      -\frac{1}{\sqrt{2}}&\frac{1}{\sqrt{2}}\\
%     \frac{1}{\sqrt{2}}&\frac{1}{\sqrt{2}}
%                                                           \end{smallmatrix}\right).$ Consequently, the solution representation is 
%                                                           \begin{align*}
%                   y(t) =   V \begin{pmatrix}  \expn^{0.5 X^1(t)+3 X^2(t)}& 0\\
%                                               0&\expn^{1.5 X^1(t)+X^2(t)}
%                               \end{pmatrix} V y_0.
%                                                           \end{align*} 

\subsection*{Acknowledgements}
\label{sec:acknowledgements}

SR is supported by the MATH+ project AA4-2 \textit{Optimal control in energy markets using rough analysis and deep networks}.
Work on this paper was started while MR and SR were supported by the DFG via Research Unit FOR 2402. Both authors would like to thank Rosa Prei{\ss} for
related discussions and for providing us with her Master thesis in which she corrected the proof of \cite[Proposition 3.8]{HK15}.

\bibliographystyle{alpha}

\begin{thebibliography}{FGGR16}

\bibitem[Abe80]{Abe80}
Eiichi Abe.
\newblock {\em Hopf algebras}, volume~74 of {\em Cambridge Tracts in
  Mathematics}.
\newblock Cambridge University Press, Cambridge-New York, 1980.
\newblock Translated from the Japanese by Hisae Kinoshita and Hiroko Tanaka.

\bibitem[BB00]{BB00}
Kevin Burrage and Pamela~M. Burrage.
\newblock {Order Conditions of Stochastic Runge--Kutta Methods by B-Series}.
\newblock {\em SIAM Journal on Numerical Analysis}, 38(5):1626--1646, 2000.

\bibitem[BBR{\etalchar{+}}18]{BBRRS18}
Christian Bayer, Denis Belomestny, Martin Redmann, Sebastian Riedel, and John
  Schoenmakers.
\newblock Solving linear parabolic rough partial differential equations.
\newblock {\em arXiv:1803.09488}, 2018.

\bibitem[BFRS16]{BFRS16}
Christian Bayer, Peter~K. Friz, Sebastian Riedel, and John Schoenmakers.
\newblock From rough path estimates to multilevel {M}onte {C}arlo.
\newblock {\em SIAM J. Numer. Anal.}, 54(3):1449--1483, 2016.

\bibitem[But87]{butcher}
John~C. Butcher.
\newblock {\em {The Numerical Analysis of Ordinary Differential Equations:
  Runge-Kutta and General Linear Methods}}.
\newblock Wiley-Interscience, USA, 1987.

\bibitem[CK98]{CK98}
Alain Connes and Dirk Kreimer.
\newblock Hopf algebras, renormalization and noncommutative geometry.
\newblock {\em Comm. Math. Phys.}, 199(1):203--242, 1998.

\bibitem[Dav07]{Dav07}
Alexander~M. Davie.
\newblock Differential equations driven by rough paths: an approach via
  discrete approximation.
\newblock {\em Appl. Math. Res. Express. AMRX}, (2):Art. ID abm009, 40, 2007.

\bibitem[DK09]{debrabant08bao}
Kristian Debrabant and Anne Kv{\ae}rn{\o}.
\newblock B-series analysis of stochastic {R}unge-{K}utta methods that use an
  iterative scheme to compute their internal stage values.
\newblock {\em SIAM J. Numer. Anal.}, 47(1):181--203, 2008/09.

\bibitem[DNT12]{DNT12}
Aur{\'e}lien Deya, Andreas Neuenkirch, and Samy Tindel.
\newblock A {M}ilstein-type scheme without {L}\'evy area terms for {SDE}s
  driven by fractional {B}rownian motion.
\newblock {\em Ann. Inst. Henri Poincar\'e Probab. Stat.}, 48(2):518--550,
  2012.

\bibitem[FGGR16]{FGGR16}
Peter~K. Friz, Benjamin Gess, Archil Gulisashvili, and Sebastian Riedel.
\newblock The {J}ain-{M}onrad criterion for rough paths and applications to
  random {F}ourier series and non-{M}arkovian {H}\"ormander theory.
\newblock {\em Ann. Probab.}, 44(1):684--738, 2016.

\bibitem[FH14]{FH14}
Peter~K. Friz and Martin Hairer.
\newblock {\em A Course on {R}ough {P}aths with an introduction to regularity
  structures}, volume XIV of {\em Universitext}.
\newblock Springer, Berlin, 2014.

\bibitem[FR14]{FR14}
Peter~K. Friz and Sebastian Riedel.
\newblock Convergence rates for the full {G}aussian rough paths.
\newblock {\em Ann. Inst. Henri Poincar\'{e} Probab. Stat.}, 50(1):154--194,
  2014.

\bibitem[FV10a]{FV10-2}
Peter~K. Friz and Nicolas~B. Victoir.
\newblock Differential equations driven by {G}aussian signals.
\newblock {\em Ann. Inst. Henri Poincar\'e Probab. Stat.}, 46(2):369--413,
  2010.

\bibitem[FV10b]{FV10}
Peter~K. Friz and Nicolas~B. Victoir.
\newblock {\em Multidimensional stochastic processes as rough paths}, volume
  120 of {\em Cambridge Studies in Advanced Mathematics}.
\newblock Cambridge University Press, Cambridge, 2010.
\newblock Theory and applications.

\bibitem[Gub10]{Gub10}
Massimiliano Gubinelli.
\newblock Ramification of rough paths.
\newblock {\em J. Differential Equations}, 248(4):693--721, 2010.

\bibitem[HHW18]{HHW18}
Jialin Hong, Chuying Huang, and Xu~Wang.
\newblock Symplectic {R}unge-{K}utta methods for {H}amiltonian systems driven
  by {G}aussian rough paths.
\newblock {\em Appl. Numer. Math.}, 129:120--136, 2018.

\bibitem[HK15]{HK15}
Martin Hairer and David Kelly.
\newblock Geometric versus non-geometric rough paths.
\newblock {\em Ann. Inst. Henri Poincar\'{e} Probab. Stat.}, 51(1):207--251,
  2015.

\bibitem[HLW06]{HairerLubichWanner}
Ernst Hairer, Christian Lubich, and Gerhard Wanner.
\newblock {\em {Geometric Numerical Integration: Structure-Preserving
  Algorithms for Ordinary Differential Equations; 2nd ed.}}, volume~31.
\newblock Springer, 2006.

\bibitem[HNW10]{hairer10sodI}
Ernst Hairer, Syvert~Paul N{\o}rsett, and Gerhard Wanner.
\newblock {\em {Solving ordinary differential equations. I: Nonstiff problems.
  2nd revised ed., 3rd corrected printing}}.
\newblock Springer, Berlin, 2010.

\bibitem[HW10]{hairer10sodII}
Ernst Hairer and Gerhard Wanner.
\newblock {\em {Solving ordinary differential equations. II: Stiff and
  differential-algebraic problems. Reprint of the 1996 2nd revised ed.}}
\newblock Springer, Berlin, 2010.

\bibitem[KP99]{kloeden99nso}
Peter~Eris Kloeden and Eckhard Platen.
\newblock {\em Numerical solution of stochastic differential equations},
  volume~21 of {\em Applications of Mathematics}.
\newblock Springer-Verlag, Berlin, 2nd edition, 1999.

\bibitem[LCL07]{LCL07}
Terry~J. Lyons, Michael Caruana, and Thierry L{\'e}vy.
\newblock {\em Differential equations driven by rough paths}, volume 1908 of
  {\em Lecture Notes in Mathematics}.
\newblock Springer, Berlin, 2007.
\newblock Lectures from the 34th Summer School on Probability Theory held in
  Saint-Flour, July 6--24, 2004, With an introduction concerning the Summer
  School by Jean Picard.

\bibitem[LQ02]{LQ02}
Terry~J. Lyons and Zhongmin Qian.
\newblock {\em System control and rough paths}.
\newblock Oxford Mathematical Monographs. Oxford University Press, Oxford,
  2002.
\newblock Oxford Science Publications.

\bibitem[Lyo98]{Lyo98}
Terry~J. Lyons.
\newblock Differential equations driven by rough signals.
\newblock {\em Rev. Mat. Iberoamericana}, 14(2):215--310, 1998.

\bibitem[Man06]{Man06}
Dominique Manchon.
\newblock Hopf algebras, from basics to applications to renormalization.
\newblock {\em arXiv:math/0408405v2}, 2006.

\bibitem[MT04]{milstein04snf}
Grigori~N. Milstein and Michael~V. Tretyakov.
\newblock {\em {Stochastic numerics for mathematical physics.}}
\newblock {Scientific Computation. Berlin: Springer. ixx, 594~p.}, 2004.

\bibitem[NTU10]{NTU10}
Andreas Neuenkirch, Samy Tindel, and J{\'e}r{\'e}mie Unterberger.
\newblock Discretizing the fractional {L}\'evy area.
\newblock {\em Stochastic Process. Appl.}, 120(2):223--254, 2010.

\bibitem[R{\"o}{\ss}10]{roessler10rkm}
Andreas R{\"o}{\ss}ler.
\newblock Runge-{K}utta methods for the strong approximation of solutions of
  stochastic differential equations.
\newblock {\em SIAM J. Numer. Anal.}, 48(3):922--952, 2010.

\bibitem[Swe69]{Swe69}
Moss~E. Sweedler.
\newblock {\em Hopf algebras}.
\newblock Mathematics Lecture Note Series. W. A. Benjamin, Inc., New York,
  1969.

\end{thebibliography}
\newcommand{\etalchar}[1]{$^{#1}$}
\def\cprime{$'$}

\end{document}